\definecolor{gray}{cmyk}{0,0,0,0.25}
\newtheorem{theorem}{Theorem}
\newtheorem{proposition}{Proposition}
\newtheorem*{theorem*}{Theorem}
\newtheorem*{proposition*}{Proposition}
\newtheorem*{lemma*}{Lemma}
\newtheorem*{corollary*}{Corollary}
\newtheorem{assumption}{Assumption}
\theoremstyle{definition}
\newtheorem{definition}{Definition}
\newcommand{\mbb}{\mathbb}
\newcommand{\defn}{\equiv}
\newcommand{\st}{\mathrm{s.t.}\;}
\newcommand{\group}[1]{\left( #1 \right)}
\newcommand{\abs}[1]{\left| #1 \right|}
\newcommand{\set}[1]{\left\{ #1 \right\}}
\newcommand{\seq}[2][]{\left( #2 \right)_{#1}}
\newcommand{\card}[1]{\left| #1 \right|}
\renewcommand{\hat}{\widehat}
\renewcommand{\tilde}{\widetilde}
\renewcommand{\emptyset}{\varnothing}
\DeclareMathOperator{\smallsum}{\textstyle{\sum}}
\newcommand{\allg}{{g}}
\newcommand{\nashset}{F}
\def\munderbar#1{\underline{\sbox\tw@{$#1$}\dp\tw@\z@\box\tw@}}
\def\thetitle{Equilibrium modeling and solution approaches inspired by nonconvex bilevel programming}
\title{\thetitle%
\thanks{This preprint has not undergone peer review or any post-submission improvements or corrections.
The Version of Record of this article is published in {\it Computational Optimization and Applications}, and is available online at \url{https://doi.org/10.1007/s10589-023-00524-w}.
}
}
\author[1]{Stuart Harwood}
\author[2]{Francisco Trespalacios}
\author[1]{Dimitri Papageorgiou}
\author[2]{Kevin Furman}
\affil[1]{ExxonMobil Corporate Strategic Research, Annandale, NJ 08801, USA}
\affil[2]{ExxonMobil Upstream Research Company, Spring, TX 77389, USA}
\begin{document}
\maketitle

\begin{abstract}
Methods for finding pure Nash equilibria have been dominated by variational inequalities and complementarity problems.
Since these approaches fundamentally rely on the sufficiency of first-order optimality conditions for the players' decision problems, they only apply as heuristic methods when the players are modeled by nonconvex optimization problems.
In contrast, this work approaches Nash equilibrium using theory and methods for the global optimization of nonconvex bilevel programs.
Through this perspective, we draw precise connections between Nash equilibria, feasibility for bilevel programming, the Nikaido-Isoda function, and classic arguments involving Lagrangian duality and spatial price equilibrium.
Significantly, this is all in a general setting without the assumption of convexity.
Along the way, we introduce the idea of \emph{minimum disequilibrium} as a solution concept that reduces to traditional equilibrium when an equilibrium exists.
The connections with bilevel programming and related semi-infinite programming permit us to adapt global optimization methods for those classes of problems, such as constraint generation or cutting plane methods, to the problem of finding a minimum disequilibrium solution.
We propose a specific algorithm and show that this method can find a pure Nash equilibrium even when the players are modeled by mixed-integer programs.
Our computational examples include practical applications like unit commitment in electricity markets.
\end{abstract}

%\begin{keywords}
%Nash equilibrium, nonconvex games, bilevel programming, semi-infinite programming, global optimization, unit commitment, spatial price equilibrium
%\end{keywords}

%%%%%%%%%%%%%%%%%%%%%%%%%%%%%%%%%%%%%%%%%%%%%
%%%%%%%%%%%%%%%%%%%%%%%%%%%%%%%%%%%%%%%%%%%%%
% MAIN CONTENT
% v3 - submitted to COAP, arXiv v3
%%%%%%%%%%%%%%%%%%%%%%%%%%%%%%%%%%%%%%%%%%%%%
%%%%%%%%%%%%%%%%%%%%%%%%%%%%%%%%%%%%%%%%%%%%%
\section{Introduction}

In game theory, pure Nash equilibrium (PNE) has a long history.
Recent work includes applications in electricity markets \citep{fullerEA17,gabriel2013B,guo2021copositive}, energy markets \citep{gabriel_etal,gabriel2013A}, competitive capacitated lot-sizing \citep{li2011competition}, and applications of integer programming games \citep{carvalho2017nash,dragotto2021zero}.
This particular body of work has been pushing the limits of the theory and methods for PNE by considering players modeled by nonconvex optimization problems.
We note that PNE, as the name suggests, is a solution in ``pure'' strategies;
this in contrast with Nash's original definition \citep{nash50}, which deals with equilibrium in mixed strategies
(roughly, a solution consisting of a probability distribution over the strategy set of each player).
Nonconvexity is a major challenge when trying to find a pure Nash equilibrium.

A popular class of methods for finding a pure Nash equilibrium involves taking the KKT conditions for each player problem and solving a complementarity problem, or more generally a variational inequality (VI).
See \cite{facchinei_pang,facchinei_pang2010} for reviews.
However, this approach fundamentally relies on the sufficiency of first-order optimality conditions for the players' decision problems.
When the players' decision problems are defined by smooth functions, there are regularity conditions under which solving a VI reformulation can give a Nash equilibrium for a related game \cite{pang2011nonconvex}, but in general, without convex structure, this approach can only guarantee convergence to a ``local'' Nash equilibrium.

Consequently, when the players do not have convex structure, and in particular are discretely constrained, modeling with complementarity problems presents significant challenges.
Some authors try to adapt the complementarity-based methods, but the resulting methods are ultimately heuristic in nature.
For instance, the approaches of \cite{gabriel2013A,gabriel2013B} combine integer decisions with complementarity models to arrive at a mixed-integer complementarity problem.
The solution methods then relax certain integrality and complementarity conditions so that they obtain a mixed-integer linear program.
While this is a clear approach to a numerical implementation, the solution that one arrives at is not guaranteed to be an equilibrium, even if one exists.
In contrast, Fuller and \c{C}elebi~\citep{fullerEA17} introduce and motivate ``minimum total opportunity cost'' as a solution concept for unit commitment problems in electricity markets with nonconvexities arising from integer decisions.
However, the structure of the resulting problem led the authors to propose a heuristic solution method, and ultimately show that the ``minimum complementarity'' approach of \cite{gabriel2013A} is an approximate solution for their minimum total opportunity cost idea.

Approaches to finding equilibria that use the Nikaido-Isoda (NI) function typically reformulate the equilibrium problem as an optimization problem \citep{gurkan2009approximations,vonheusingerEA09}.
The NI function also appears in some methods that employ a fixed-point formulation to find an equilibrium \citep{contrerasEA04,uryasevEA94}.
While the basic optimization reformulations are often applicable when there is a certain amount of nonconvexity, the numerical methods so far proposed are limited to the setting where the player problems are convex, or at least have convex constraint sets.
This is in part because, in the most general case, an equilibrium is a global minimizer of an implicitly defined and nonconvex function;
finding such a point is a challenge.
Consequently, none of the approaches involving the NI function have so far considered discretely-constrained player problems.

Potential games \citep{monderer1996potential,ui2000shapley} have special structure that admit solution by a single (potentially nonconvex) optimization problem.
While this approach can be applicable without assuming convexity of the player problems, the necessary structure imposes strong conditions implying that a PNE must exist when the players' feasible sets are compact and a continuous potential exists \citep[Lemma 2.1]{monderer1996potential}.
In contrast, in this work we are interested in situations when a PNE is not known {\it a priori} to exist, and what alternatives to equilibrium are available when an equilibrium does not in fact exist.
Further, we will allow the possibility of global or side constraints that must be satisfied at equilibrium;
such a feature complicates reformulation as a potential game.

One of the ultimate goals of this paper is to describe a solution method for finding a pure Nash equilibrium that applies even when the player problems are nonconvex, including discretely-constrained problems.
This method and our overall perspective are inspired by the literature on global optimization for bilevel programs with nonconvex followers, and related semi-infinite programs \citep{bard83,guerravazquezEA08,mitsos11,steinEA02,tsoukalasEA09}.
Our fundamental view is that the problem of finding a PNE is equivalent to a bilevel feasibility problem;
through purely mathematical considerations for handling this feasibility problem, we introduce our relaxed solution concept called \emph{minimum disequilibrium}.
A minimum disequilibrium solution coincides with a PNE when a PNE exists;
otherwise, minimum disequilibrium provides the best alternative, in a particular sense, to an equilibrium.
Further, minimum disequilibrium has meaning in specific applications;
for instance, in unit commitment problems for electricity markets, our method finds the minimum total opportunity cost solution from \cite{fullerEA17}.

Subsequently, by specializing a global optimization method for semi-infinite programming to the minimum disequilibrium problem, we arrive at a very powerful and general procedure that may be applied even when a player is modeled by a mixed-integer nonlinear program (MINLP).
The method has characteristics of a cutting plane or constraint generation method, and relies on the solution of a series of related optimization problems to form a convergent sequence of upper and lower bounds on the measure of disequilibrium.
If these bounds converge to zero, we have an equilibrium;
otherwise, the lower bound will converge to a value strictly greater than zero, and we have a certificate that equilibrium does not exist.

We also discuss the specialization of the minimum disequilibrium problem to the economic situation of price-taking players.
In this setting, the minimum disequilibrium problem decomposes into a primal problem, often referred to as the maximum social welfare problem, and a dualized version of it.
Consequently, we show that optimal primal-dual solutions correspond with a minimum disequilibrium solution.
As a result, we are able to generalize the classic result that optimal dual variables of the maximum social welfare problem are equilibrium prices in the spatial price equilibrium problem \citep{samuelson1952spatial}.
Our generalization shows that this may hold even if the maximum social welfare problem is, for instance, an MINLP.

The present work has some elements in common with \cite{dragotto2021zero,guo2021copositive}.
These papers also aim to compute PNE for games involving discretely-constrained players.
% Guo et.al. / they
Guo et al.~\citep{guo2021copositive} focus on players modeled by mixed-binary quadratic programs, and leverage a reformulation of these problems as completely positive programs;
consequently, they are able to use a strong duality result between completely positive programs and copositive programs \citep{burer2009copositive}.
They also consider pricing schemes for electricity markets.
Interestingly, since their approach is based on a strong duality result, they still fundamentally rely on a certain set of KKT conditions.
While this approach is effective for this class of problems, we feel this highlights the need for a different perspective, and that the present work addresses this need.
Meanwhile, Dragotto and Scatamacchia~\citep{dragotto2021zero} focus on enumerating all pure Nash equilibria for integer programming games, a class of games where each player's decision problem is an integer linear program.
They propose a cutting plane algorithm for characterizing the convex hull of the set of pure Nash equilibrium solutions.
By focusing on a specific class of games, they are able to specialize their results and, for instance, enumerate all equilibria.
In this work, we will aim to be more general, allowing players to have decision/feasible sets with an uncountable number of points.

To reiterate, the main contributions of this work include:
\begin{itemize}
	\item A numerical method for finding a PNE, if one exists, even if the players have general nonconvex decision problems.
	To the best of our knowledge, this is the first method that can provably find an arbitrarily accurate approximate PNE of a game with players defined by an MINLP.
	\item A rigorous method for finding the minimum total opportunity cost solution in unit commitment problems.
	\item A generalization of the result that optimal dual variables are equilibrium prices in spatial price equilibrium problems;
	this holds even if the corresponding maximum social welfare problem is a general nonconvex optimization problem.
\end{itemize}
\Cref{tab:methods} summarizes how our proposed method compares with previously published methods for finding PNE.
A feasible set described as ``mixed-integer, linearly constrained'' refers to a set that can be represented as
\[\set{(z^c,z^d) \in \mbb{R}^{n_c}\times \mbb{Z}^{n_d} : Az^c + Bz^d \le b}\]
for appropriate sized real matrices $A$, $B$, and real vector $b$.
% That is, such a set would describe the feasible set of a mixed-integer linear program (MILP).
A set described as ``integer, linearly constrained'' would be similar, but with $n_c = 0$
(no continuous variables).

\begin{table}
\caption{Methods for finding pure Nash equilibrium (PNE), when they apply, and guarantees}
\label{tab:methods}
\centering
\small
\rowcolors{1}{gray}{white}
\begin{tabulary}{\textwidth}{CCCC}
\hline
Method	
	& Players' objective functions
		& Players' feasible sets
			& Notes/guarantees
\\
\hline
\textbf{This work}: Reformulation as semi-infinite program, constraint generation method
	& \textbf{continuous} (encompasses everything below)
	& \textbf{compact} (encompasses everything below if bounded)
	& \textbf{finds PNE if one exists}, requires global solution of subproblems \\
Cutting plane method \citep{dragotto2021zero}
	& linear
	& integer, linearly constrained
	& finds PNE if one exists, requires global solution of subproblems \\
Copositive duality reformulation, cutting plane method \citep{guo2021copositive}
	& convex quadratic
	& mixed-integer, linearly constrained
	& finds PNE if one exists, requires global solution of mixed-integer linear programs \\
Minimum complementarity method \citep{gabriel2013A,gabriel2013B}
	& linear
	& mixed-integer, linearly constrained
	& may not find PNE even if one exists \\
Optimization reformulation, gradient-based minimization \citep{gurkan2009approximations,vonheusingerEA09}
	& continuously differentiable, convex
	& closed, convex
	& PNE exists, monotonicity assumptions required for convergence of gradient method \\
VI reformulation \citep[Prop.~1.4.2]{facchinei_pang}
	& continuously differentiable, convex
	& closed, convex
	& PNE exists, monotonicity assumptions required for convergence of methods for VI \\
\hline
\end{tabulary}
\end{table}

This work is organized as follows.
In \Cref{sec:defn_equiv}, we formally state the definition of the game and the corresponding notion of an equilibrium in pure strategies.
We also establish a few connections with other frameworks.
In \Cref{sec:characterization}, we proceed to characterize PNE as a solution to a bilevel programming feasibility problem and introduce the minimum disequilibrium formulation.
In \Cref{sec:solution_methods}, we leverage the clear connections with semi-infinite programming and propose our solution method for finding a minimum disequilibrium solution under very general conditions.
Since a minimum disequilibrium solution coincides with a pure Nash equilibrium when the latter exists, our method will find an equilibrium if one exists.
In \Cref{sec:pc_duality}, we specialize the discussion on minimum disequilibrium to a particular setting consistent with spatial price equilibrium.
In this setting, we establish that the minimum disequilibrium problem decomposes into a primal problem and its Lagrangian dual;
this permits a generalization to the nonconvex case of the classic result regarding equilibrium prices and dual variables.
Finally, in \Cref{sec:ex}, we provide some examples to illustrate the application of these solution methods.
These include an application to unit commitment problems, with integer-constrained players, and a spatial price equilibrium problem where the max social welfare problem is an MINLP.

%%%%%%%%%%%%%%%%%%%%%%%%%%%%%%%%%%%%%%%%%%%%%
%%%%%%%%%%%%%%%%%%%%%%%%%%%%%%%%%%%%%%%%%%%%%
\section{Definition of equilibrium and relationship to other frameworks}
\label{sec:defn_equiv}
In this section we present a form of a non-cooperative, complete information game with side constraints along with the associated definition of an equilibrium in pure strategies.
We proceed to show how this game can handle a more common/typical form of game, as well as how it is a special case of generalized games.

To motivate our alternate form of game and definition of equilibrium, consider two different settings for modeling economic equilibrium: price-taking players versus Cournot players.
The traditional informal definition of a PNE as a point from which each player has no incentive to deviate, given the decisions of the other players, is convenient when modeling Cournot players, since each player's objective depends directly on the decisions of the other players.
However, in the case of price-taking behavior, the players of the game only depend on quantities (prices) that no player explicitly determines.
This inspires our modified definition of the players and equilibrium.
The definition is in essence derived from a Stackelberg game, or more generally a single-leader/multi-follower game, in which the leader has a trivial (identically zero) objective.
This alternate definition and associated notation introduce a bilevel structure.
The result is a natural setting for our subsequent numerical developments, which are inspired by the literature on bilevel programming and related semi-infinite programming.

\subsection{Definition of constrained pure Nash equilibrium}
\label{sec:defn}
Let the $m$ players of the game be indexed by $i \in I \defn \set{1,\dots,m}$, and let each player be modeled by an optimization problem parametric in $x \in \mbb{R}^{n_0}$:
\begin{equation}
\label{player_i}
\tag{$\mathcal{A}_i$}
\begin{aligned}
S_i(x) \defn \arg
\min_{y_i}\; & g_i(x, y_i) \\
\st
& y_i \in Y_i,
\end{aligned}
\end{equation}
where
$Y_i \subset \mbb{R}^{n_i}$
and
$g_i : \mbb{R}^{n_0} \times Y_i \to \mbb{R}$.
This formulation for the player problems is quite general;
for instance, in some of the numerical examples we consider, each $Y_i$ enforces certain decision variables to be integer-valued.
% Given the vector $x$, these problems are independent.
We also introduce the global or side constraint set
\[
	G \subset \mbb{R}^{n_0} \times \mbb{R}^{n_1} \times \dots \times \mbb{R}^{n_m},
\]
which can be used to link the solutions of the player problems.
We denote this game as \\
$\mathcal{G}(G, g_1,Y_1, \dots, g_m,Y_m)$
or $\mathcal{G}(G, \seq[i\in I]{g_i,Y_i})$.
We have the following definition.

\begin{definition}
\label{defn:cpne}
A point $(x^*,y_1^*,\dots,y_m^*)$ is a \emph{constrained pure Nash equilibrium} (cPNE) of the game $\mathcal{G}(G, \seq[i\in I]{g_i,Y_i})$ if
$(x^*,y_1^*,\dots,y_m^*) \in G$ and $y_i^* \in S_i(x^*)$ for each $i \in I$.
\end{definition}

Another important definition is the optimal value function of problem~\eqref{player_i}
\begin{equation}
\label{eq:optimal_value}
g^*_i(x) \defn \inf_{y_i} \set{ g_i(x,y_i) : y_i \in Y_i }.
\end{equation}
As usual, for any $i$, define 
$g^*_i(x) = -\infty$ if optimization problem~\eqref{player_i} is unbounded
(and $g^*_i(x) = +\infty$ if it is infeasible, although for the most part we assume each $Y_i$ is nonempty).

As a brief note on notation, we will sometimes write
\[\begin{aligned}
&(y_1, \dots, y_m) = y \in \mbb{R}^{(\sum_{i \in I} n_i)}
\text{ and } \\
&(x,y_1, \dots, y_m) = (x,y) \in \mbb{R}^{n_0} \times  \mbb{R}^{(\sum_{i\in I} n_i)}.
\end{aligned}\]
More generally, a symbol without a subscript refers to a tuple/block vector of the subscripted objects;
for example we have $z = (z_1, z_2, \dots, z_J)$ for some vectors or scalars $z_j$.
Throughout this work we will not take for granted the existence of solutions to various mathematical optimization problems.
Further, we will often deal with the optimal value of an optimization problem.
This means that, for consistency if nothing else, we will denote the general optimization problem of 
``minimize the function $f$ on the set $X$'' 
as
``$\inf_x \set{f(x) : x \in X}$.''
As usual, ``$\arg \min$'' refers to the solution set, empty or not.

\subsection{Relationship to other frameworks and definitions}
\label{sec:others}
We briefly mention some other formalisms and definitions of equilibrium and how the game specified in \Cref{sec:defn} relates to them.

\subsubsection{Modeling a common form of game}
\label{sec:common_ref}
A more common form of game does not consider side constraints and makes the assumption that each player is parameterized by the decisions of all other players;
for each $i \in I$, player $i$ is modeled as
\begin{equation}
\notag
% \label{typical_player_i}
% \tag{$\mathcal{A}_i'$}
\begin{aligned}
S_i'(y_{-i}) = \arg
\min_{y_i}\; & g_i'(y_{-i}, y_i) \\
\st
& y_i \in Y_i,
\end{aligned}
\end{equation}
where
$y_{-i}$ denotes the vector of variables excluding the vector $y_i$:
\[
y_{-i} = (y_1, \dots, y_{i-1}, y_{i+1}, \dots, y_{m}).
\]
In this case we denote the game $\mathcal{G}'(\seq[i\in I]{g_i',Y_i})$.
A pure Nash equilibrium of $\mathcal{G}'(\seq[i\in I]{g_i',Y_i})$ is then defined as a point $(y_1^*, \dots, y_m^*)$ such that
$y_i^* \in S_i'(y_{-i}^*)$ for each $i \in I$
(see for instance \cite{facchinei_pang,facchinei_pang2010}).

Given the game $\mathcal{G}'(\seq[i\in I]{g_i',Y_i})$, we can convert it to a game in the form from \Cref{sec:defn} by simply specifying the side constraints so that the solution components of each player can be ``communicated'' to each other, essentially by defining the $x$ variables as a copy of the $y$ variables:
\[
G' = 
\set{ 
	(x_1, \dots, x_m, y_1,\dots, y_m) 
	\in \prod_i \mbb{R}^{n_i} \times \prod_i \mbb{R}^{n_i}
	: x_i = y_i, \forall i \in I 
}.
\]
If we define $g_i : (x,y_i) \mapsto g_i'(x_{-i}, y_i)$, then it is a simple exercise to check that
$y^*$ is a PNE of $\mathcal{G}'(\seq[i\in I]{g_i',Y_i})$ if and only if $(y^*,y^*)$ is a cPNE of $\mathcal{G}(G', \seq[i\in I]{g_i,Y_i} )$.

\subsubsection{As a generalized Nash game}
\label{sec:gen_ref}
When the feasible sets of the players' decision problems depend on each other's actions, we get a generalized Nash game with the corresponding solution called a generalized Nash equilibrium \citep{facchineiEA10}.
For our present purpose, we assume that there are $m+1$ players, indexed by $i \in \set{0,1,\dots,m}$, and player $i$'s decision problem is
\begin{equation}
% \label{GNEP_player}
% \tag{$\mathcal{B}_i$}
\notag
\begin{aligned}
T_i(y_{-i}) = \arg
\min_{y_i}\; & h_i(y_{-i}, y_i) \\
\st
& (y_{-i}, y_i) \in H_i,
\end{aligned}
\end{equation}
where 
$H_i$ is a subset of
$\group{\prod_{j \neq i} \mbb{R}^{n_j}} \times \mbb{R}^{n_i}$
and
$h_i$ is a real-valued function on $H_i$.
We denote this game $\hat{\mathcal{G}}(\seq[i\in\set{0,1,\dots,m}]{h_i,H_i})$.
A point 
$(y_0^*, \dots, y_m^*)$
is a generalized Nash equilibrium (GNE) of $\hat{\mathcal{G}}(\seq[i\in\set{0,1,\dots,m}]{h_i,H_i})$ if 
$
y_i^* \in T_i(y_{-i}^*),
$
for all $i \in \set{0,1,\dots,m}$.

Given the game $\mathcal{G}(G, \seq[i\in I]{g_i,Y_i})$, we convert it to the generalized case by defining the $0^{th}$ player to have constraints that enforce $(x,y) \in G$ and a trivial objective:
define $h_0 : (y_{-0}, y_0) \mapsto 0$ and $H_0 = \set{(y_{-0}, y_0) : (y_0, y_1, \dots, y_m) \in G}$;
define for $i \in I$, $h_i : (y_{-i}, y_i) \mapsto g_i(y_0, y_i)$ and $H_i = \set{(y_{-i},y_i) : y_i \in Y_i}$.
Then again, it is simple to see that $(x^*, y^*)$ is a cPNE of the game $\mathcal{G}(G, \seq[i\in I]{g_i,Y_i})$ if and only if it is a GNE of $\hat{\mathcal{G}}(\seq[i\in\set{0,1,\dots,m}]{h_i,H_i})$.

Thus, the focus of our work can be considered a special case of GNE.
However, like the previously reviewed literature on pure Nash equilibrium, the theory and methods for generalized Nash equilibrium are also largely limited to the case that the players' decision problems are convex optimization problems.
Again, see \cite{facchineiEA10}.
Consequently, that literature does not help in the cases that we will consider.

% In the definition given in \cite{facchineiEA10}, the feasible set of the player problem is described using a set-valued mapping
% $H_i^M : \group{\prod_{j \neq i} \mbb{R}^{\hat{n}_j}} \rightrightarrows \mbb{R}^{\hat{n}_i}$.
% This description is entirely equivalent to the one we have chosen here:
% given a set-valued mapping $H_i^M$, define $H_i$ as its graph, and conversely given $H_i \subset \group{\prod_{j \neq i} \mbb{R}^{\hat{n}_j}} \times \mbb{R}^{\hat{n}_i}$ define the set-valued mapping by 
% $H_i^M(u_{-i}) = \set{ u_i : (u_{-i}, u_i) \in H_i}$.

\subsection{Terminology}
Moving forward, we will focus on the problem of finding a cPNE according to \Cref{defn:cpne}, and all subsequent problems will be posed in the form given in \Cref{sec:defn}.
Consequently, we may simply call a point an \emph{equilibrium} if it satisfies \Cref{defn:cpne} for an appropriately defined game.

%%%%%%%%%%%%%%%%%%%%%%%%%%%%%%%%%%%%%%%%%%%%%
%%%%%%%%%%%%%%%%%%%%%%%%%%%%%%%%%%%%%%%%%%%%%
\section{Characterization of equilibrium and minimum disequilibrium}
\label{sec:characterization}
In this section we pose an abstract optimization problem that can be used to characterize an equilibrium, and as a by-product, we introduce the concept of minimum disequilibrium.
We also motivate this concept by discussing connections with unit commitment in electricity markets.

\subsection{An optimization reformulation}
We can express the problem of finding a cPNE as a feasibility problem for the following bilevel program:
\begin{align}
\notag
\inf_{x,y_1,\dots,y_m}\; &0 \\
\st
\notag &(x,y_1,\dots,y_m) \in G, \\
\notag &y_i \in \arg\min_{z_i} \set{ g_i(x,z_i) : z_i \in Y_i }, \quad \forall i \in I.
\end{align}
Here, and throughout the rest of the work, we use the variables $z_i$ as an alias for the $i^{th}$ player's decision variables $y_i$ when, for instance, they appear in a ``lower-level'' problem.
This is common notation in the bilevel programming literature.
We can reformulate this bilevel problem as
\begin{subequations}
\label{pne_sip}
\begin{align}
\inf_{x,y_1,\dots,y_m}\; &0 \\
\st
& (x,y_1,\dots,y_m) \in G, \\
& y_i \in Y_i, \quad\forall i \in I, \label{pne_sip:feas1} \\
& g_i(x,y_i) \le g_i(x,z_i), \quad\forall z_i \in Y_i, \quad\forall i \in I.\label{pne_sip:infinite}
\end{align}
\end{subequations}
Such a reformulation has been considered by, for instance, \cite{bard83,tsoukalasEA09},
although it is easy to see that $y_i$ is optimal for the $i^{th}$ lower-level player problem if and only if it is feasible (Constraint~\eqref{pne_sip:feas1}) and its objective value is less than that of any other feasible point (Constraint~\eqref{pne_sip:infinite}).
Depending on the cardinality of the $Y_i$ sets, Problem~\eqref{pne_sip} may have an infinite number of constraints
(for each $i$, $z_i$ is effectively an index for the last set of constraints \eqref{pne_sip:infinite}).
Thus, \eqref{pne_sip} may be categorized as a semi-infinite program (SIP) 
(see e.g. \cite{guerravazquezEA08,stein12} for reviews).
Whether or not \eqref{pne_sip} is truly infinitely constrained depends on the nature of the $Y_i$ sets;
however, whether or not it is infinitely constrained does not complicate the following analysis and discussion.
Thus, to allow for the most general context possible, we will refer to problem~\eqref{pne_sip} as an SIP.

Once again, \eqref{pne_sip} is a feasibility problem;
evidently the most difficult constraint to handle is the ``infinite'' one \eqref{pne_sip:infinite}.
Thus, we can consider replacing the trivial objective with one which aims to minimize the violation of the infinite constraint.
To this end, let 
\[
\begin{aligned}
\allg   : (x,y) \mapsto &(g_1(x,y_1), \dots, g_m(x,y_m) ), \\
\allg^* : (x)   \mapsto &(g^*_1(x), \dots, g^*_m(x) ).
\end{aligned}
\]
When a player problem is unbounded, $\allg^*$ is extended real-valued.
Define
$\bar{\mbb{R}} = \mbb{R} \cup \set{-\infty,+\infty}$
(with the usual order), their Cartesian product
$\bar{\mbb{R}}^m = \bar{\mbb{R}} \times \dots \times \bar{\mbb{R}}$,
and the non-negative orthant
$\bar{\mbb{R}}^m_+ = \set{ w \in \bar{\mbb{R}}^m : w_i \ge 0, \forall i \in I }$.
% Note that we don't have to worry about the player problems being infeasible since any value of $x$ for which some player is infeasible would be infeasible in \eqref{pne_sip}.

In the following result, which characterizes equilibrium as a certain solution of an optimization problem, the objective can be thought of as a measure of ``disequilibrium,'' or roughly the dissatisfaction of all the players in aggregate.
This depends on an $\bar{\mbb{R}}$-valued function $\mu$;
a convenient choice is
\[
	\mu : (w_1,\dots,w_m) \mapsto \smallsum_{i=1}^m w_i,
\]
although we allow for different forms
(for instance, any norm satisfies the required properties).

\begin{proposition}
\label{prop:equilibrium}
Let $\mu : \bar{\mbb{R}}^m \to \bar{\mbb{R}}$ be any function satisfying:
\begin{enumerate}\itemsep0pt \parskip0pt
\item if $w \in \bar{\mbb{R}}^m_+$ then $\mu(w) \ge 0$;
\item $w \in \bar{\mbb{R}}^m_+$ and $\mu(w) = 0$ if and only if $w_i = 0$ for all $i \in I$.
\end{enumerate}
Consider
\begin{equation}
\label{md}
\tag{$\mathcal{MD}$}
\begin{aligned}
\delta \defn
\inf_{x,y_1,\dots,y_m}\;
&\mu(\allg(x,y) - \allg^*(x)) \\
\st 
& (x, y_1,\dots,y_m) \in G, \\
& y_i \in Y_i, \quad\forall i \in I.
\end{aligned}
\end{equation}
% Any $(x^*,y_1^*,\dots, y_m^*)$ is a GNE if and only if it is a solution of \eqref{md} with $\delta = 0$.
Any $(x^*,y_1^*,\dots, y_m^*)$ is a solution of \eqref{md} with $\delta = 0$ if and only if it is a cPNE of the game $\mathcal{G}(G, \seq[i\in I]{g_i,Y_i})$.
\end{proposition}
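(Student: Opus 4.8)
The plan is to first pin down the sign of the objective on the feasible set of \eqref{md}, and then to characterise exactly when it vanishes. Fix any feasible point, i.e.\ $(x,y) \in G$ with $(x,y_i) \in F_i$ for every $i \in I$. Since $y_i$ is then feasible for the $i$-th problem \eqref{player_i}, we have $g^*_i(x) \le g_i(x,y_i)$, and because $g_i$ is real-valued on $F_i$ the difference $g_i(x,y_i) - g^*_i(x)$ is a well-defined element of $\bar{\mbb{R}}$: it is a nonnegative real when $g^*_i(x)$ is finite, and equals $+\infty$ exactly when the $i$-th problem is unbounded. In either case the vector of gaps satisfies $\allg(x,y) - \allg^*(x) \in \bar{\mbb{R}}^m_+$, so the first assumed property of $\mu$ gives $\mu(\allg(x,y) - \allg^*(x)) \ge 0$. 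As this holds at every feasible point, $\delta \ge 0$; consequently ``being a solution of \eqref{md} with $\delta = 0$'' is equivalent to ``being feasible with objective value $0$,'' which simplifies the logic of the remaining steps.

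The heart of the argument is then the second property of $\mu$. At a feasible point the vector of gaps lies in $\bar{\mbb{R}}^m_+$, so $\mu(\allg(x,y) - \allg^*(x)) = 0$ holds if and only if $g_i(x,y_i) - g^*_i(x) = 0$ for every $i$ (the forward direction is the nontrivial half of property~2, the converse being the statement $\mu(0) = 0$ contained in it). I would then translate vanishing gaps into optimality: because $(x,y_i) \in F_i$ already holds, the equality $g_i(x,y_i) = g^*_i(x)$ says precisely that $y_i$ attains the infimum defining $g^*_i(x)$, i.e.\ $y_i \in S_i(x)$.

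With these two observations the biconditional splits cleanly. If $(x^*,y^*)$ is a GNE, then $(x^*,y^*) \in G$ and $y^*_i \in S_i(x^*)$ for each $i$; the latter supplies both the feasibility $(x^*,y^*_i) \in F_i$ and zero gaps, so the objective is $0$ and, by the first paragraph, the point is optimal with $\delta = 0$. Conversely, a point achieving $\delta = 0$ is feasible---hence $(x^*,y^*) \in G$ and $(x^*,y^*_i) \in F_i$---with zero objective, so the preceding step forces every gap to vanish and thus $y^*_i \in S_i(x^*)$; together with membership in $G$ this is exactly \Cref{defn:gne_mod}.

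I expect the only delicate point to be the extended-real bookkeeping: one must confirm that each gap is a legitimate, nonnegative element of $\bar{\mbb{R}}$ even when a player's problem is unbounded (so $g^*_i(x) = -\infty$ and the gap is $+\infty$), and that property~2 then correctly excludes such a configuration from yielding objective $0$, since a $+\infty$ entry means the gap vector is not the zero vector. The assumption $(x,y_i) \in F_i$ is what keeps this harmless, because it forces $g_i(x,y_i)$ to be finite and so avoids any indeterminate $(+\infty) - (+\infty)$.
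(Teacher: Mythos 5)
Your proof is correct and follows essentially the same route as the paper's: establish $\delta \ge 0$ from property~1 of $\mu$ on the feasible set, then use property~2 in both directions to identify zero objective value with $y_i^* \in S_i(x^*)$ for every $i$. Your extra care with the extended-real case (an unbounded player problem giving a gap of $+\infty$, which property~2 correctly rules out) is a point the paper passes over silently, but it does not change the argument.
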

\begin{proof}
First, note that for any $(x,y_1,\dots,y_m)$ feasible in \eqref{md}, we must have $g_i(x,y_i) \ge g_i^*(x)$ (since $y_i$ is feasible in \eqref{player_i}).
Thus, since $\mu(w) \ge 0$ for all $w \ge 0$, the objective value of \eqref{md} is bounded below by zero;
that is, $\delta \ge 0$.

Let $(x^*,y_1^*,\dots,y_m^*)$ be an equilibrium.
Then $(x^*,y_1^*,\dots,y_m^*) \in G$ and for each $i$, $y_i^*$ is optimal for \eqref{player_i};
thus $g_i(x^*,y_i^*) = g_i^*(x^*)$ for each $i$, and so $\mu(\allg(x^*,y^*) - \allg^*(x^*)) = 0$.
Furthermore, $y_i^*$ is feasible in \eqref{player_i} so $y_i^* \in Y_i$ for each $i$;
therefore, $(x^*,y_1^*,\dots,y_m^*)$ is feasible in \eqref{md}, and has an objective value equal to zero.
It follows that $\delta \le 0$, but using the reverse inequality established above, we have $\delta = 0$, and $(x^*,y_1^*,\dots,y_m^*)$ is a solution of \eqref{md}.

Let $(x^*,y_1^*,\dots,y_m^*)$ be a solution of \eqref{md} with $\delta = 0$.
As noted above, by feasibility of $(x^*,y^*)$, we have 
$\allg(x^*,y^*)  - \allg^*(x^*) \ge 0$.
But since $\mu(\allg(x^*,y^*)  - \allg^*(x^*)) = \delta = 0$, using the properties of $\mu$, we must have for all $i$ that
$g_i(x^*,y_i^*) = g_i^*(x^*)$;
but further, $y_i^*$ is feasible for \eqref{player_i} and thus optimal.
Finally, since we also have $(x^*,y_1^*,\dots,y_m^*) \in G$, it follows that $(x^*,y_1^*,\dots,y_m^*)$ is an equilibrium.
\end{proof}

Characterization of an equilibrium in terms of the solution of an optimization problem has been proposed before, typically utilizing the NI function.
See \Cref{app:ni_function} for further discussion on this connection as well as an alternate proof of Proposition~\ref{prop:equilibrium} from this perspective.
However, we reiterate that the practical use of an optimization problem such as \eqref{md} to characterize equilibrium has, until this work, been confined to cases assuming a certain amount of convexity.

\subsection{Minimum disequilibrium and application in electricity markets}
We can interpret a solution of Problem~\eqref{md} as the ``closest'' point to being an equilibrium.
Specifically, its solution is a point $(x^*,y_1^*,\dots,y_m^*) \in G$ such that $y_i^*$ is $\epsilon_i$-optimal in \eqref{player_i} for each $i$,
that is,
\[
g_i(x^*,y_i^*) = g_i^*(x^*) + \epsilon_i,
\]
and $\mu( \epsilon_1,\dots, \epsilon_m )$ is at a minimum.
This perspective suggests that finding a solution with $\delta > 0$ may still be useful and provide a meaningful or practically useful point.
In general, we will refer to a solution of Problem~\eqref{md} as a \emph{minimum disequilibrium} solution, whether or not the solution is in fact an equilibrium.

As mentioned, minimum disequilibrium agrees with the minimum total opportunity cost idea in \cite{fullerEA17}.
Thus, that work provides excellent motivation for the value of a minimum disequilibrium solution for decision making in certain electricity markets;
we provide a brief discussion here.
In some electricity markets, a market operator has the problem of setting short-term electricity prices.
The players in this setting are electricity generators who decide their level of generation given prices, and the global linking constraints $G$ may model a required minimum level of electricity generation to meet demand, as well as limits on prices.
Generators may have fixed start-up costs;
this introduces nonconvexities, and a competitive equilibrium may not exist.
To address this, different approaches can be taken.
For instance, in the PJM
%\footnote{PJM Interconnection is a regional transmission operator, originally serving the Pennsylvania-New Jersey-Maryland region, but which now serves a large region of the Mid-Atlantic and Midwestern United States.}
electricity market ``uplift'' or ``make whole'' payments are introduced, to ensure that generators operate at the required levels to meet demand without losing money.
Naturally, this leads to the idea of minimizing the total of the uplift payments.

A related formulation, the minimum total opportunity cost, aims to find prices ($x$), and for each generator $i$, operational levels and other decisions ($y_i$), so that the total discrepancy between the generators' realized profits ($-g_i(x,y_i)$) and what they \emph{could} have made ($-g_i^*(x)$) is minimized.
The result is a formulation that fits into the form of Problem~\eqref{md}.
We will explore this further in \Cref{sec:uc}.

%%%%%%%%%%%%%%%%%%%%%%%%%%%%%%%%%%%%%%%%%%%%%%%%%%%%%%%%%%%
%%%%%%%%%%%%%%%%%%%%%%%%%%%%%%%%%%%%%%%%%%%%%%%%%%%%%%%%%%%
\section{Solution method for minimum disequilibrium}
\label{sec:solution_methods}

Problem~\eqref{md} may be nontrivial to solve.
Evidently, the challenge stems from the fact that the players' optimal value functions, $g_i^*$, are in general implicitly defined.
If the player problems have some known structure that permits a more algebraic description of the optimal value function, then that could be used to reformulate the problems.
Notably, if the player problems are convex, arguments from duality theory or KKT conditions can be used to obtain various mathematical programs; see for instance \cite{gabriel_etal}, or from the perspective of SIP, \cite{steinEA03}.

However, our aim in this section is to develop methods that can solve Problem~\eqref{md} (or an equivalent reformulation) under fairly broad assumptions, including the cases that the player problems are nonconvex.
In particular, since verification of equilibrium requires the global solution of \eqref{md}, our discussion is motivated by methods for global optimization.

For the sake of clarity, we will assume that the function $\mu$ appearing in Proposition~\ref{prop:equilibrium} takes the previously mentioned form $\mu : w \mapsto \sum_i w_i$.
Thus, for each $i$ we can introduce a new scalar variable $w_i$ which essentially approximates $g_i^*$ from below, and Problem~\eqref{md} becomes
\begin{align}
\label{md_ref}
\tag{$\mathcal{MD}'$}
\delta = 
\inf_{x,y,w}	& \smallsum_{i\in I} (g_i(x,y_i)  - w_i) \\
\st 				
\notag & (x,y_1,\dots,y_m) \in G, \\
% \notag & x \in X, \\
\notag & y_i \in Y_i,		\quad \forall i \in I, \\
\notag & w_i \le g_i^*(x), 	\quad \forall i \in I.
\end{align}

\begin{algorithm}
\caption{Solution method for Problem~\eqref{md_ref}}
\label{alg:CG}
\begin{algorithmic}[1]
\REQUIRE
$\varepsilon>0$,
$Y_i^{L,0} \subset Y_i$, 
$Y_i^{L,0} \neq \emptyset$, for each $i \in I$,
non-negative sequences
$\seq[k\in\mbb{N}]{\epsilon^k}$
and
$\seq[k\in\mbb{N}]{\eta_i^k}$, for each $i \in I$
\STATE
	$\delta^{U,0} = +\infty$,
	$\delta^{L,0} = 0$
\FOR{$k \in \mbb{N}$}
	\STATE
		\textbf{Solve Lower Bounding Problem}
		\begin{align}
		\label{md_ref_LB}
		\munderbar\delta^{k} \defn 
		\inf_{x,y,w}	& \smallsum_{i\in I} (g_i(x,y_i)  - w_i) \\
		\st
		\notag & (x,y_1,\dots,y_m) \in G, \\
		\notag & y_i \in Y_i,			\quad \forall i \in I, \\
		\notag & w_i \le g_i(x,z_i), 	\quad \forall z_i \in Y_i^{L,k}, \quad\forall i \in I,
		\end{align}
		approximately to obtain feasible solution $(x^k,y^k,w^k)$
		with objective value 
		\[\bar\delta^{L,k} = \smallsum_{i\in I} (g_i(x^k,y_i^k)  - w_i^k)\]
		and
		lower bound $\munderbar\delta^{L,k} \le \munderbar\delta^k$ such that
		$\abs{\bar\delta^{L,k} - \munderbar\delta^{L,k}} \le \epsilon^k$
	\STATE
		\textbf{Update lower bound:} $\delta^{L,k} \gets \max\set{\munderbar\delta^{L,k}, \delta^{L,k-1}}$
	\STATE
		\textbf{Solve player problems} \eqref{player_i} for each $i \in I$, for $x = x^k$
		\begin{equation}
		\notag
		% \tag{$\mathcal{A}_i^N$}
		g_i^*(x^k) = \inf_{z_i} \set{g_i(x^k, z_i) : z_i \in Y_i},
		\end{equation}
		approximately to obtain feasible solution $z_i^k$
		and lower bound $g_i^{L,k} \le g_i^*(x^k)$, such that
		$\abs{g_i(x^k, z_i^k) - g_i^{L,k}} \le \eta_i^k$.
	
	\IF{$\epsilon^k \le \varepsilon$ and $w_i^k \le g_i^{L,k}$, for all $i \in I$}
	\label{step:early_term}
		\STATE $(x^*,y^*) \gets (x^k,y^k)$
		\RETURN $(x^*,y^*)$
	\ENDIF
	\STATE
	$Y_i^{L,k+1} \gets Y_i^{L,k} \cup \set{z_i^k}$ for each $i \in I$
	\STATE
	\textbf{Update upper bound:}\\
	$\bar\delta^{U,k} \gets \smallsum_{i\in I} (g_i(x^k,y_i^k) - g_i^{L,k})$
	\STATE
	$\delta^{U,k} \gets \min\set{\bar\delta^{U,k}, \delta^{U,k-1}}$
	\IF{$\bar\delta^{U,k} < \delta^{U,k-1}$}
		\STATE
			$(x^*,y^*) \gets (x^k,y^k)$
	\ENDIF
	\IF{$\delta^{U,k} - \delta^{L,k} \le \varepsilon$}
	\label{step:term}
		\RETURN $(x^*,y^*)$
	\ENDIF
\ENDFOR
\end{algorithmic}
\end{algorithm}

The overall solution method is given in \Cref{alg:CG}.
The structure to note is that in the lower bounding problem, Problem~\eqref{md_ref_LB}, at any iteration $k$, the set $Y_i^{L,k}$ is a subset of $Y_i$ (for each $i$).
In other words, $Y_i^{L,k}$ is a subset of the feasible set of the $i^{th}$ player, and so we have that
$g_i^*(x) \le \inf\set{ g_i(x, z_i) : z_i \in Y_i^{L,k} }$.
Thus, Problem~\eqref{md_ref_LB} is a relaxation of Problem~\eqref{md_ref}, and so $\munderbar\delta^{k}$ is a lower bound on $\delta$, as desired.
By solving the player problems, we add constraints to the lower bounding problem and improve the lower bound.
This general structure is also shared by the algorithm from \cite{blankenshipEA76} for the solution of SIP.
Similar extensions in the context of SIP are explored in \cite{djelassi2017hybrid,harwoodEA16,turan2023optimality}.

However, the specialization of this class of SIP solution methods to finding equilibria is novel and unexplored.
Furthermore, in our general setting, both the lower bounding problem and the player subproblems can be nonconvex MINLPs, which are generally difficult to solve exactly (i.e., with a zero percent optimality gap), especially for large-scale instances.
Consequently, practitioners may wish to solve these instances to provable approximate optimality within a given non-zero tolerance, e.g., $0.1\%$, and use the primal and dual bounds instead of a more difficult-to-obtain optimal objective function value with $0\%$ optimality gap.
To this end, \Cref{alg:CG} allows for approximate solution of the various subproblems, making it relevant as a method for guaranteeing an approximate minimum disequilibrium when faced with challenging MINLP subproblems.

We remark on a few details of the algorithm.
The initial subsets $Y_i^{L,0}$ can be arbitrary;
they should merely be nonempty so that Problem~\eqref{md_ref_LB} is not unbounded on the first iteration.
In practice, initially each $Y_i^{L,0}$ set should be finite as well
(otherwise we have missed the point of the algorithm, which is to avoid infinite constraints).
Single points are added to them through the course of the algorithm and so they remain finite.
Further, since $0$ is an {\it a priori} lower bound on $\delta$, the lower bound is initialized as $\delta^{L,0} = 0$.
Meanwhile, \Cref{alg:CG} can terminate in one of two ways.
Either:
\begin{enumerate}%\itemsep0pt %\parskip0pt
\item
At Step~\ref{step:early_term}, the approximate solution of Problem~\eqref{md_ref_LB} is feasible, and thus approximately optimal, for Problem~\eqref{md_ref}.
Specifically, if $w_i^k \le g_i^{L,k}$ for all $i$, then $(x^k, y^k, w^k)$ is feasible in Problem~\eqref{md_ref}.
Since $(x^k, y^k, w^k)$ is feasible, its objective value $\bar\delta^{L,k}$ must be greater than $\delta$.
Then we have
$\munderbar{\delta}^k \le \delta \le \bar\delta^{L,k} \le \munderbar{\delta}^k + \epsilon^k$.
Thus, if the lower bounding problem is solved accurately enough ($\epsilon^k \le \varepsilon$), then $(x^k, y^k, w^k)$ is $\varepsilon$-optimal in Problem~\eqref{md_ref}.

If one only cares about the existence of an equilibrium solution, the algorithm can terminate early if it happens that $\delta^{L,k} > 0$;
since we have that $\delta^{L,k}$ is a lower bound on $\delta$, we have by Proposition~\ref{prop:equilibrium} that an equilibrium solution does not exist.
\item
An $\varepsilon$-optimal solution $(x^*,y^*)$ of Problem~\eqref{md_ref} is found.
Note that
$\sum_i (g_i(x,y_i) - g_i^*(x))$
is an upper bound for $\delta$;
we have merely evaluated the objective at a feasible point of Problem~\eqref{md_ref}.
More specifically, at any iteration for which the upper bound is finite, we have a point $(x^*,y^*) = (x^{k'},y^{k'})$ which came from a solution of Problem~\eqref{md_ref_LB} (at a previous iteration $k'$);
thus, $(x^{k'},y^{k'}) \in G$ and $y_i^{k'} \in Y_i$ for each $i$ and so 
$(x,y,w) = (x^{k'},y^{k'},g^{L,k'})$
is feasible in Problem~\eqref{md_ref} since $g_i^{L,k'} \le g_i^*(x^{k'})$ for each $i$ by construction.
Consequently, its objective value is an upper bound on $\delta$, and the upper bound $\delta^{U,k}$ tracks the best of these upper bounds.
\end{enumerate}

The following result establishes when the algorithm produces an $\varepsilon$-optimal solution in finite iterations.
To be clear, we obtain a point $(x^*,y^*) \in G$ satisfying
$y_i^* \in Y_i$ for each $i$, and 
% if
% $g_i(x^*,y_i^*) = g_i^*(x^*) + \tilde\eta_i$ 
% for each $i$,
% then
% $\sum_i \tilde\eta_i < \delta + \varepsilon$.
$\sum_i \group{g_i(x^*,y_i^*) - g_i^*(x^*)} \le \delta + \varepsilon$.
We also obtain upper and lower bounds on $\delta$, and if the upper bound is small, then as in the discussion following Proposition~\ref{prop:equilibrium}, we have a point that is ``close'' to being an equilibrium.
In general, we obtain an $\varepsilon$-approximate minimum disequilibrium solution.
See \Cref{app:termination_proof} for its proof.

\begin{theorem}
\label{thm:cg_termination}
% Assume that the set
% $\set{ (x,y) : (x,y_1,\dots,y_m) \in G, y_i \in Y_i, \forall i \in I}$
% is compact and nonempty.
% Assume that for each $i$, $g_i$ is continuous and $Y_i$ is compact.
% Then for any $\varepsilon > 0$, \Cref{alg:CG} produces an $\varepsilon$-optimal solution $(x^*,y^*)$ of Problem~\eqref{md_ref} in finite iterations.
Assume that the set
$\set{ (x,y) : (x,y_1,\dots,y_m) \in G, y_i \in Y_i, \forall i \in I}$
is compact and nonempty.
Assume that for each $i$, $g_i$ is continuous and $Y_i$ is compact.
Let
$\epsilon^* = \limsup_{k \to \infty} \epsilon^k$
and
$\eta_i^* = \limsup_{k\to\infty} \eta_i^k$ for each $i$.
Then for any
$\varepsilon > \epsilon^* + 2 \smallsum_{i \in I} \eta_i^*$,
\Cref{alg:CG} produces an $\varepsilon$-optimal solution $(x^*,y^*)$ of Problem~\eqref{md_ref} in finite iterations.
\end{theorem}

Given appropriate algebraic descriptions of the $G$ and $Y_i$ sets and $g_i$ functions, implementation of \Cref{alg:CG} in an algebraic modeling language like AIMMS%
\footnote{Copyright \copyright{} 2021 AIMMS B.V. All rights reserved. AIMMS is a registered trademark of AIMMS B.V. \url{www.aimms.com}}
or GAMS \citep{gams} is a relatively simple matter.
These optimization modeling environments provide access to high-quality numerical methods for the global solution of the player subproblems~\eqref{player_i} and lower-bounding subproblem~\eqref{md_ref_LB} required by the algorithm.
In some practically relevant situations, these subproblems are mixed-integer linear or quadratic programs (MILP/MIQP), which typically can be solved robustly and at scale, despite being nonconvex problems.
See also the example in \Cref{sec:uc}.

%%%%%%%%%%%%%%%%%%%%%%%%%%%%%%%%%%%%%%%%%%%%%
%%%%%%%%%%%%%%%%%%%%%%%%%%%%%%%%%%%%%%%%%%%%%
\section{Primal-dual optimality and minimum disequilibrium}
\label{sec:pc_duality}

\Cref{alg:CG} is a method for solving the minimum disequilibrium problem \eqref{md_ref} under very general assumptions.
In this section, we take advantage of specific problem structure to decompose the corresponding minimum disequilibrium problem into a primal problem and its Lagrangian dual problem.
The problem structure that we analyze is consistent with spatial price equilibrium (SPE) problems and other models of competitive behavior.
The SPE problem is a classic problem going back to \cite{samuelson1952spatial}, although see also \cite{nagurney2022spatial} for a recent application and further references.
In the SPE problem, spatially distributed producers and consumers participate in a competitive market for a commodity;
the goal is to find the trade flows between the participants and prices at equilibrium.
When the players are modeled by convex optimization problems, Samuelson's basic approach \cite{samuelson1952spatial} establishes that optimal dual variables can be interpreted as equilibrium prices for SPE problems.
We will see that our primal-dual decomposition of the minimum disequilibrium problem provides a generalization, to the nonconvex case, of this classic result.
See Theorem~\ref{thm:dual_disequilibrium}.

First, assume that each player's optimization problem has the form
\[
\inf_{z_i} \set{ g_i^a(z_i) + \sum_{k \in K} x_k g_{i,k}^b(z_i) : z_i \in Y_i },
\]
for nonempty $Y_i \subset \mbb{R}^{n_i}$, appropriate $\mbb{R}$-valued functions $g_i^a$ and $g_{i,k}^b$, and where
$K \defn \set{1,\dots,n_0}$.
That is, their feasible sets are constant/independent of $x$, and their objectives have related forms.
Once again, denote the overall objective of this player problem as $g_i(x,z_i)$ and the optimal objective value as $g_i^*(x)$.
Further, assume that the global constraint set $G$ has the following form:
\[
G = 
	\mbb{R}^{n_0} \times 
	\set{ 
	(y_1, \dots, y_m)
	%\in \mbb{R}^{n_1} \times \dots \times \mbb{R}^{n_m}
		: \smallsum_{i \in I} g_{i,k}^b(y_i) = 0, \forall k \in K 
	}.
\]

This particular structure is relevant for modeling price-taking behavior of players in multiple market structures.
As a simple example, consider $m=2$ players:
player 1 produces a commodity, while player 2 consumes it.
If $x_1$ is the price of the commodity and $g_{1,1}^b(y_1) = -y_1$ models the negative quantity that the producer sells, then $x_1 g_{1,1}^b(y_1)$ is negative revenue, and with the term $g_1^a$ modeling the cost of production, this models a profit-maximizing producer.
Meanwhile, $g_{2,1}^b(y_2) = y_2$ would model how much the consumer purchases, and at equilibrium we would require that what is sold equals what is purchased, $y_1 = y_2$, or $g_{1,1}^b(y_1) + g_{2,1}^b(y_2) = 0$.
This is precisely what the global constraints $G$ enforce.
The example considered in \Cref{sec:spe} also fits into this form.

Finally, to avoid some pathological edge cases, we will need the following assumption that there exists a value of the prices $x$ such that each player problem is (simultaneously) bounded.
This helps in Theorem~\ref{thm:dual_disequilibrium} below.
\begin{assumption}
\label{assm:bounded}
There exists $x' \in \mbb{R}^{n_0}$ such that
$g_i^*(x') > -\infty$ for each $i \in I$.
\end{assumption}

The significance of these assumptions becomes clear when specializing problem~\eqref{md} under these assumptions.
If we let $\mu(w) = \smallsum_i w_i$, what is important to note is that for all $(x,y_1,\dots,y_m) \in G$, we have
\[
\smallsum_{i \in I} g_i(x,y_i) = \smallsum_{i \in I} g_i^a(y_i),
\]
since the global constraints set $G$ includes the conditions
$\smallsum_i g_{i,k}^b(y_i) = 0$, for each $k$,
which cause the other terms in the objectives cancel.
Thus we obtain
\begin{align}
\label{md_pc}
\delta = 
\inf_{x,y_1,\dots,y_m}\; & \sum_{i \in I} g_i^a(y_i) - \sum_{i \in I} g_i^*(x) \\
\st
\notag & \smallsum_{i \in I} g_{i,k}^b(y_i) = 0, \quad\forall k \in K, \\
\notag & x \in \mbb{R}^{n_0}, \\
\notag & y_i \in Y_i, \quad\forall i \in I.
\end{align}
Note that under Assumption~\ref{assm:bounded}, it follows that $\delta$ must be finite when Problem~\eqref{md_pc} is feasible;
take any value of $y$ feasible in \eqref{md_pc} and we see
$0 \le \delta \le \sum_i g_i^a(y_i) - \sum_i g_i^*(x') < +\infty$.

Inspecting problem \eqref{md_pc}, we see that its feasible set is 
$\mbb{R}^{n_0} \times \set{ y \in \prod_{i} Y_i : \smallsum_{i} g_{i,k}^b(y_i) = 0, \forall k \in K }$.
Thus we can write 
$\delta = \delta^P - \delta^{{D}}$ 
where
\begin{align}
\label{primal}
\tag{$\mathcal{P}$}
\delta^P \defn
\inf_{y_1,\dots,y_m}\; & \smallsum_{i \in I} g_i^a(y_i)  \\
\st
\notag & \smallsum_{i \in I} g_{i,k}^b(y_i) = 0, \quad\forall k \in K, \\
\notag & y_i \in Y_i, \quad\forall i \in I,
\end{align}
and
\begin{align}
\notag
\delta^{D} &\defn
-\inf_{x \in \mbb{R}^{n_0}}\; -\smallsum_{i \in I} g_i^*(x) \\
\label{dual_like}
\tag{$\mathcal{D}$}
&\begin{aligned}
=\sup_{x\in \mbb{R}^{n_0}} 
	&\inf_{z}\; 
	\smallsum_{i \in I} g_i^a(z_i) + \smallsum_{k \in K} x_k \smallsum_{i \in I} g_{i,k}^b(z_i)\\
	&\st z_i \in Y_i, \quad\forall i \in I,
\end{aligned}
\end{align}
where we have used the definition of $g_i^*$ and the fact that $\sup_x f(x) = -\inf_x(-f(x))$ for any real function $f$.

We note that \eqref{dual_like} is in fact the Lagrangian dual problem of \eqref{primal}.
This observation inspires us to characterize a minimum disequilibrium solution in terms of optimal primal-dual solutions.
The following result formalizes this and states that optimal dual variables $x^*$ are the prices corresponding to a minimum disequilibrium solution, and the duality gap equals the minimized disequilibrium.
When an equilibrium exists, we regain the classic interpretation of optimal dual variables as equilibrium prices.
%The example in \Cref{sec:spe} elaborates on this result and we show that the primal problem~\eqref{primal} is the ``maximum social welfare'' problem that appears in many applications of economic equilibrium.

\begin{theorem}
\label{thm:dual_disequilibrium}
Suppose Assumption~\ref{assm:bounded} holds.
If $(x^*,y^*)$ is a minimum disequilibrium solution of \eqref{md_pc}, then $\delta$ equals the duality gap $\delta^P - \delta^D$.
% $\delta = \delta^P - \delta^{{D}}$.
Further, $(x^*,y^*)$ is a minimum disequilibrium solution of \eqref{md_pc}
if and only if
$y^* = (y_1^*,\dots, y_m^*)$ is optimal for \eqref{primal} and 
$x^*$ is optimal for \eqref{dual_like}.
\end{theorem}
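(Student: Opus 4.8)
The plan is to exploit the fact that the feasible set of \eqref{md_pc} is a Cartesian product $\mbb{R}^{n_0} \times \set{y \in \prod_i Y_i : \smallsum_{i} g_{i,k}^b(y_i) = 0, \forall k \in K}$ together with the fact that its objective $\smallsum_i g_i^a(y_i) - \smallsum_i g_i^*(x)$ is separable: the first sum depends only on $y$ and the second only on $x$. Over a product domain, the infimum of a separable objective splits into independent infima, which is exactly the decomposition $\delta = \delta^P - \delta^D$ recorded in the text preceding the theorem. I would first make this splitting explicit and, alongside it, confirm the identity $\delta^D = \sup_x \smallsum_i g_i^*(x)$ by separating the inner infimum in \eqref{dual_like}: since $z \mapsto \smallsum_i g_i(x,z_i)$ is a sum of terms each depending on a single $z_i \in Y_i$, the inner minimization over $z$ decouples into $\smallsum_i \inf_{z_i \in Y_i} g_i(x,z_i) = \smallsum_i g_i^*(x)$. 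This same computation certifies that \eqref{dual_like} is the Lagrangian dual of \eqref{primal}, with the $x_k$ playing the role of multipliers on the equality constraints $\smallsum_i g_{i,k}^b(y_i)=0$.

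Before drawing conclusions I would pin down finiteness to avoid $\infty - \infty$ pathologies. From Proposition~\ref{prop:equilibrium} applied to \eqref{md_pc}, which is merely \eqref{md} rewritten under the present structure, we have $\delta \ge 0$, while Assumption~\ref{assm:bounded} gives $\delta^D \ge \smallsum_i g_i^*(x') > -\infty$. The existence of a minimum disequilibrium solution makes \eqref{primal} feasible, hence $\delta^P < +\infty$. Combining these with $\delta = \delta^P - \delta^D \ge 0$ forces both $\delta^P$ and $\delta^D$ to be finite. With finiteness in hand, Part~1 is immediate: $\delta = \delta^P - \delta^D$ is, by the previous paragraph, precisely the duality gap between \eqref{primal} and its dual \eqref{dual_like}, and it is a finite nonnegative number.

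For the equivalence I would argue through two one-sided bounds valid at every feasible $(x,y)$ of \eqref{md_pc}: feasibility of $y$ for \eqref{primal} gives $\smallsum_i g_i^a(y_i) \ge \delta^P$, and the definition of the supremum in \eqref{dual_like} gives $\smallsum_i g_i^*(x) \le \delta^D$. Hence the objective value satisfies $\smallsum_i g_i^a(y_i) - \smallsum_i g_i^*(x) \ge \delta^P - \delta^D = \delta$. Writing the objective gap at $(x^*,y^*)$ as $A + B$ with $A \defn \smallsum_i g_i^a(y_i^*) - \delta^P \ge 0$ and $B \defn \delta^D - \smallsum_i g_i^*(x^*) \ge 0$, the point $(x^*,y^*)$ attains $\delta$ (i.e. is a minimum disequilibrium solution) if and only if $A + B = 0$, which for nonnegative quantities holds if and only if $A = 0$ and $B = 0$; that is, $y^*$ is optimal for \eqref{primal} and $x^*$ is optimal for \eqref{dual_like}. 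This yields both directions of the equivalence at once.

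The main obstacle I anticipate is not any single hard inequality but the careful bookkeeping of extended-real arithmetic: the optimal value functions $g_i^*$ may take the value $-\infty$, so both the decomposition $\delta = \delta^P - \delta^D$ and the splitting into $A+B$ are only meaningful once finiteness is secured. Assumption~\ref{assm:bounded} is exactly what rules out the degenerate dual value $\delta^D = -\infty$, and the feasibility supplied by the hypothesized minimum disequilibrium solution rules out $\delta^P = +\infty$; together with $\delta \ge 0$ these close off every $\infty - \infty$ case, after which the separability argument runs routinely.
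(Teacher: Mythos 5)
Your proposal is correct, and it rests on the same foundation as the paper's proof: separability of \eqref{md_pc} over its product feasible set, the identification of \eqref{dual_like} as the Lagrangian dual of \eqref{primal}, and finiteness secured by Assumption~\ref{assm:bounded}. The difference is organizational. The paper proves the equivalence first, by block-wise contradiction arguments (a strictly better feasible point of \eqref{md_pc} must improve either the $x$-part or the $y$-part, contradicting dual or primal optimality, and conversely a better $x$ alone or a better $y$ alone would improve \eqref{md_pc}), and only then deduces the duality-gap identity from the attained optimal values; it never invokes the decomposition $\delta = \delta^P - \delta^D$ inside the equivalence proof. You reverse this order: you take $\delta = \delta^P - \delta^D$ (recorded in the paper's text just before the theorem) as the central identity, which makes the gap claim immediate, and you obtain both directions of the equivalence simultaneously by writing the objective gap at a feasible point as $A + B$ with $A = \sum_i g_i^a(y_i^*) - \delta^P \ge 0$ and $B = \delta^D - \sum_i g_i^*(x^*) \ge 0$, so that optimality holds iff $A = B = 0$. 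This buys a contradiction-free argument that treats both implications at once, at the cost of justifying the infimum-splitting in extended arithmetic up front, which you do. One minor repair: your finiteness paragraph derives $\delta^P < +\infty$ from the existence of a minimum disequilibrium solution, but in the backward direction of the equivalence that hypothesis is not available; there, feasibility of \eqref{primal}, and hence finiteness of $\delta^P = \sum_i g_i^a(y_i^*)$, is supplied instead by the hypothesized primal-optimal $y^*$, after which the rest of your argument goes through unchanged.
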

\begin{proof}
%
% FYI, throughout this proof, we're kind of saved by the fact that $g_i^a$ is $\mbb{R}$ (i.e., finite) -valued.
% Since each $Y_i$ is nonempty, then 
% $g_i^*(x) < +\infty$ for each $i$;
% that is, the players are always feasible, but possibly unbounded.
% So although $g_i^*$ could be $-\infty$-valued, the ambiguities of adding infinities are avoided when looking at the objective values and assuming that there exists an optimal solution, although see below.
%
First we show the equivalence of solutions of \eqref{md_pc} and solutions of \eqref{primal} and \eqref{dual_like}.
Let $y^*$ and $x^*$ be optimal for \eqref{primal} and \eqref{dual_like}, respectively.
Then we must have that $(x^*,y^*)$ is feasible for \eqref{md_pc}.
For a contradiction, assume that $(x^*,y^*)$ is not optimal for \eqref{md_pc}.
Then there exists $(x,y)$ feasible in \eqref{md_pc} with
$\sum_i g_i^a(y_i)   - \sum_i g_i^*(x) < 
	\sum_i g_i^a(y_i^*) - \sum_i g_i^*(x^*)$.
This implies that either 
$- \sum_i g_i^*(x) < - \sum_i g_i^*(x^*)$,
or
$\sum_i g_i^a(y_i) < \sum_i g_i^a(y_i^*)$,
or both;
% a + b < c + d;
% if a >= c AND b >= d, then a + b >= c + d, =><=
% Thus at least one clause fails
in any case this contradicts the optimality of $x^*$, $y^*$, or both.
Thus $(x^*,y^*)$ is optimal for \eqref{md_pc}
(that is, $(x^*,y^*)$ is a minimum disequilibrium solution).

Conversely, assume that $(x^*,y^*)$ is optimal for \eqref{md_pc}.
As noted, under Assumption~\ref{assm:bounded}, 
$\delta = \sum_i g_i^a(y_i^*) - \sum_i g_i^*(x^*)$
must be finite.
Since $g_i^a$ is real-valued, 
$\sum_i g_i^*(x^*) = \delta - \sum_i g_i^a(y_i^*)$ is also finite.
First, assume for a contradiction that $x^*$ is not optimal in \eqref{dual_like}.
Then there exists $x$ such that 
\[
	- \smallsum_i g_i^*(x) < - \smallsum_i g_i^*(x^*).
\]
However, since the objective function of the player is real-valued and $Y_i$ is nonempty for each $i$, $g_i^*(x)$ cannot be $+\infty$-valued, and so 
$- \sum_i g_i^*(x)$
is finite.
Adding $\sum_i g_i^a(y_i^*)$ to either side of the above inequality yields
$\sum_i g_i^a(y_i^*) - \sum_i g_i^*(x) < 
	\sum_i g_i^a(y_i^*) - \sum_i g_i^*(x^*)$,
which contradicts the optimality of $(x^*, y^*)$.
Next, for a contradiction assume that $y^*$ is not optimal in \eqref{primal}.
Then there exists $y$ feasible in \eqref{primal} such that
$\sum_i g_i^a(y_i) < \sum_i g_i^a(y_i^*)$.
Subtracting $\sum_i g_i^*(x^*)$ from either side of this inequality gives
$\sum_i g_i^a(y_i) - \sum_i g_i^*(x^*) < 
	\sum_i g_i^a(y_i^*) - \sum_i g_i^*(x^*)$,
which again contradicts the optimality of $(x^*, y^*)$.
% I think the only edge case to worry about occurs here;
% if $\sum_i g_i^*(x^*) = -\infty$
% (i.e. some player is unbounded)
% then there is the possibility that there is a $y$ with 
% $\sum_i g_i^a(y_i) < 	\sum_i g_i^a(y_i^*)$
% And yet
% $\sum_i g_i^a(y_i) + \infty = \sum_i g_i^a(y_i^*) + \infty$,
% which does not quite contradict the optimality of $(x^*, y^*)$.
% Basically, if
% $\sum_i g_i^*(x^*) = -\infty$,
% then any $(x^*,y)$ would be trivially optimal for \eqref{md_pc},
% (while only some $y$ would be optimal for \eqref{primal}).
% Assuming that there is a value $x'$ for which all players are bounded precludes this;
% if $(x^*, y^*)$ is optimal then
% $\delta = \sum_i g_i^a(y_i^*) - \sum_i g_i^*(x^*)
%     \le \sum_i g_i^a(y_i^*) - \sum_i g_i^*(x') < +\infty$.
% Thus $\sum_i g_i^*(x^*)$ must be finite.
Thus $y^*$ and $x^*$ are optimal for \eqref{primal} and \eqref{dual_like}, respectively.

To show the first claim, let $(x^*,y^*)$ be a minimum disequilibrium solution of \eqref{md_pc}.
Then as already shown, $y^*$ and $x^*$ are optimal for \eqref{primal} and \eqref{dual_like}.
In this case 
$\delta^P = \sum_i g_i^a(y_i^*)$
and
$\delta^{D} = \sum_i g_i^*(x^*)$
and so
$\delta^P - \delta^{D} = \sum_i g_i^a(y_i^*) - \sum_i g_i^*(x^*) = \delta$.
\end{proof}

As an example of when Theorem~\ref{thm:dual_disequilibrium} reduces to the classic case, if Problem~\eqref{primal} is feasible and bounded, and for each $i$, $g_i^a$ is convex, $g_{i,k}^b$ is linear for each $k$, and $Y_i$ is polyhedral, then standard Lagrangian duality results imply that strong duality holds
(see for instance \cite[Prop.~5.2.1]{bertsekas_nlp}).
Then Theorem~\ref{thm:dual_disequilibrium} implies that the optimal solution of \eqref{primal} along with the optimal Lagrange multipliers of the dualized constraints yield an equilibrium solution.

However, we reiterate that Theorem~\ref{thm:dual_disequilibrium} holds more generally.
Regardless of convexity, Theorem~\ref{thm:dual_disequilibrium} establishes that if we solve the primal problem and its dual, we will arrive at a minimum disequilibrium solution that may have value in certain situations or as an approximate equilibrium.

Finally, Theorem~\ref{thm:dual_disequilibrium} is a rather clean version of various results in the literature dealing with pricing in electricity markets.
For instance, Theorem 8 of \cite{fuller2008} addresses a different problem form and asserts that a point is an equilibrium if and only if it is primal and dual optimal and there is zero duality gap.
Certain results about the properties of convex hull pricing, introduced by \cite{gribikEA07} and studied further by \cite{schiro2015convex}, go through similar arguments.
% also studied in hogan2003minimum?
The application of the present work to that literature is a fertile area of future research.

\subsubsection*{Specialization of \Cref{alg:CG}}
\Cref{alg:CG} also applies in this setting.
We briefly discuss how \Cref{alg:CG} specializes to a method for solving the dual problem.
By similar reasoning that we can decompose the minimum disequilibrium problem~\eqref{md_pc}, the lower-bounding problem~\eqref{md_ref_LB} in this setting decomposes into the primal problem~\eqref{primal} above and
\begin{equation}
\label{relaxed_dual}
\begin{aligned}
\delta^{D,U} = 
\sup_{x,w}\; & \sum_i w_i \\
\st
& w_i \le g_i^a(z_i) + \smallsum_{k \in K} x_k g_{i,k}^b(z_i),
 	\quad\forall z_i \in Y_i^L, \quad\forall i \in I.
\end{aligned}
\end{equation}
This is a relaxation of the dual problem~\eqref{dual_like}, and thus $\delta^{D,U}$ is an upper bound on $\delta^D$.
The primal problem is static and only needs to be solved once;
\Cref{alg:CG} reduces to iteratively solving the relaxed dual~\eqref{relaxed_dual} and the player problems.
Solving the player problems is equivalent to minimizing the Lagrangian of \eqref{primal}:
\begin{equation}
\label{min_lagrangian}
\begin{aligned}
\sum_i g_i^*(x) =
\inf_{z_1,\dots,z_m}\; 
&\sum_i \group{ g_i^a(z_i) + \smallsum_{k \in K} x_k g_{i,k}^b(z_i) } \\
\st
& z_i \in Y_i, \quad\forall i \in I.
\end{aligned}
\end{equation}
We recognize this overall procedure as a bundle subgradient method or cutting plane method for maximizing the concave dual function.
See for instance \cite[Section~6.3.3]{bertsekas_nlp}.
In order to apply Theorem~\ref{thm:cg_termination}, we would need to assume that each $Y_i$ is compact, the defining functions $g_i^a$ and $g_{i,k}^b$ are all continuous, and additionally impose bounds on the dual variables.
See \Cref{sec:spe} for an application of this procedure.

%%%%%%%%%%%%%%%%%%%%%%%%%%%%%%%%%%%%%%%%%%%%%
%%%%%%%%%%%%%%%%%%%%%%%%%%%%%%%%%%%%%%%%%%%%%
\section{Examples}
\label{sec:ex}
Here we present some examples in order to illustrate applications of our theoretical and numerical developments.
Our numerical experiments, and in particular applications of \Cref{alg:CG}, are implemented in AIMMS version 4.72.
For more extensive numerical tests in the setting of ``competitive pooling,'' we refer to the authors' recent work in \citep{papageorgiou2023pooling}.
That work includes examples involving players modeled by mixed-integer quadratically-constrained nonconvex quadratic programs, with up to 60 continuous variables and 5 binary variables each.

%%%%%%%%%%%%%%%%%%%%%%%%%%%%%%%%%%%%%%%%%%%%%
%%%%%%%%%%%%%%%%%%%%%%%%%%%%%%%%%%%%%%%%%%%%%
\subsection{Example: Discretely-constrained Cournot players}
\label{sec:dcc}
We consider a simple two-player example from \cite[Section 2.1]{papageorgiou2021note}.
This example was originally introduced to illustrate that KKT-based methods for equilibrium are ill-suited to the situation of discretely-constrained players.
In particular, the method from \cite{gabriel2013B} may fail to yield an equilibrium even if one exists.
We draw the same conclusion, and in addition demonstrate that \Cref{alg:CG} finds the equilibrium.

In their simplest form, each player $i$, for $i \in \set{1,2}$, is modeled by the optimization problem
\[
	\inf_{y_i} \set{-y_1 - y_2 : y_i \in [0, 1.1] \cap \mbb{Z}}
\]
where $\mbb{Z}$ is the set of integers.
The feasible set of each player's decision problem is just $\set{0,1}$, and the unique equilibrium is $(y_1,y_2) = (1,1)$.

Ignoring the discrete constraints (that is, taking the continuous relaxation) of each player yields a convex optimization problem.
This inspires the approach of \cite{gabriel2013B}, for instance.
In this approach, the KKT conditions of each player's continuous relaxation are combined together, and then the integrality conditions are re-introduced.
The result is that one seeks the solution of the following discretely-constrained linear complementarity problem:
\begin{subequations}
\label{eq:dclcp}
\begin{align}
\label{eq:dclcp:one}
0 \le \lambda_i - 1 \perp y_i \ge 0, 	&\quad\forall i \in \set{1,2}, \\
\label{eq:dclcp:two}
0 \le 1.1 - y_i \perp \lambda_i \ge 0,	&\quad\forall i \in \set{1,2}, \\
y_i \in \set{0,1},						&\quad\forall i \in \set{1,2}.
\end{align}
\end{subequations}
We note, however, that $(y_1,y_2) = (1,1)$ does \emph{not} satisfy these conditions:
Conditions~\eqref{eq:dclcp:one} imply we must have $\lambda_i = 1$ for each $i$,
while Conditions~\eqref{eq:dclcp:two} imply $\lambda_i = 0$ for each $i$.
Thus, we have a contradiction.
In fact, it is easy to verify that Conditions~\eqref{eq:dclcp} have no solution.

Let us consider how \Cref{alg:CG} would apply to this problem.
First, we reformulate the game into the appropriate format via the discussion in \Cref{sec:common_ref}.
We introduce the global constraint set 
\[
	G = \set{ (x_1,x_2,y_1,y_2) \in \mbb{R}^4 : x_1 = y_1, x_2 = y_2}
\]
and redefine the players' problems as
\[\begin{aligned}
	g_1^*(x) &= \inf_{y_1} \set{ -y_1 - x_2 : y_1 \in [0, 1.1] \cap \mbb{Z}}, \\
	% \quad\text{ and }\quad
	g_2^*(x) &= \inf_{y_2} \set{ -y_2 - x_1 : y_2 \in [0, 1.1] \cap \mbb{Z}}.
\end{aligned}\]
To initialize \Cref{alg:CG}, we set $Y_1^L = Y_2^L = \set{0}$.
Then the lower-bounding problem~\eqref{md_ref_LB} is initially
\[\begin{aligned}
\delta^L = 
\inf_{x,y,w} & (-y_1 - x_2) + (-y_2 - x_1)  - w_1 - w_2 \\
\st
& x_1 = y_1, \\
& x_2 = y_2, \\
& y_1 \in [0, 1.1] \cap \mbb{Z}, \\
& y_2 \in [0, 1.1] \cap \mbb{Z}, \\
& w_1 \le 0 - x_2, \\
& w_2 \le 0 - x_1.
% & w_1 \le -z_1 - x_2, 	\quad \forall z_1 \in Y_1^L, \\
% & w_2 \le -z_2 - x_1, 	\quad \forall z_2 \in Y_2^L.
\end{aligned}\]
We note that at every iteration, the lower-bounding problem is an MILP, as are the player problems.
Furthermore, even though this example suffers from a poor formulation (the continuous relaxations of the players are weak), we do not have to worry about that directly;
we can leave this issue to robust and constantly-improving solvers for MILP.

However, for this particular example we can solve the required problems by inspection.
The solution of the lower-bounding problem on the first iteration is
$(x_1,x_2,y_1,y_2,w_1,w_2) = (1,1,1,1,-1,-1)$
with optimal objective value $\delta^L = -2$.
Solving the player problems for $x = (1,1)$ we get $z_1 = z_2 = 1$, with optimal objective values $g_1^*(x) = g_2^*(x) = -2$.
Adding these points to the $Y_i^L$ sets, we get $Y_1^L = Y_2^L = \set{0,1}$.
However, we get an upper bound on the disequilibrium of 
$\delta^U = (-y_1 - x_2) + (-y_2 - x_1) - g_1^*(x) - g_2^*(x) = (-2) + (-2) - (-2) - (-2) = 0$.
Thus, the termination condition at Step~\ref{step:term} of \Cref{alg:CG} implies that 
$x_i = y_i = 1$ for all $i$ is the equilibrium solution, as desired.

%%%%%%%%%%%%%%%%%%%%%%%%%%%%%%%%%%%%%%%%%%%%%
%%%%%%%%%%%%%%%%%%%%%%%%%%%%%%%%%%%%%%%%%%%%%
\subsection{Application: Unit commitment in electricity markets}
\label{sec:uc}
As discussed at the end of \Cref{sec:characterization}, models of electricity markets are a source of equilibrium problems with nonconvex players.
Here, we present an example from \cite{fullerEA17} to demonstrate that \Cref{alg:CG} can handle these practical problems.
%, as well as the advantage it has over other methods that give ambiguous answers.
We also note that no other method can solve this example.
The players are mixed-integer quadratic programs (MIQP) and the global/side constraints are nontrivial;
the methods listed in \Cref{tab:methods} (besides the proposed method) either do not apply in this case, or else have no guarantees and give ambiguous answers.
%The two methods explored in \cite{fullerEA17} are heuristics, with no guarantee of finding an equilibrium.
%IPG is restricted to linear integer program players.
Further, the nontrivial side constraints complicate solving this example as a potential game%
\footnote{As previously noted, potential games with continuous potentials and compact strategy sets always have a pure Nash equilibrium;
see for instance \cite[Lemma 2.1]{monderer1996potential}.
We will see that this example in fact does not have an equilibrium, which is another indication that this example cannot be formulated as a potential game with a well-behaved potential function.
}.

The example is the ``single period unit commitment model'' from \cite{fullerEA17}.
%(specifically, Section~2.3 and using data from Table~1).
We have three price-taking, profit-maximizing power producers modeled by MIQPs.
An integer (specifically binary) decision is required to model a fixed cost for starting up generation.
Thus, the producer players are parametric in the price $x^p$;
for $i \in I = \set{1,2,3}$, we have
\begin{subequations}
\begin{align}
\inf_{y_i = (y_i^b, y_i^c)}\; &a_i y_i^c + (\sfrac{1}{2}) c_i (y_i^c)^2 + b_i y_i^b - x^p y_i^c \\
\st
%\label{uc:producer:start}
& y_i^b C_i^L \le y_i^c \le y_i^b C_i^U, \\
%\label{uc:producer:stop}
& y_i^b \in \set{0,1}, \quad y_i^c \in \mbb{R}.
\end{align}
\end{subequations}
Data are in \Cref{tab:uc_data}.

\begin{table}
\caption{Data for producer players of unit commitment example of \Cref{sec:uc}}
\label{tab:uc_data}
\centering
\begin{tabular}{cccccc}
\hline
Index $i$	&$a_i$	&$c_i$	 &$b_i$		&$C_i^L$ &$C_i^U$ \\
\hline
$1$			&$10$	&$0.05$	 &$4000$	&$400$	 &$600$ \\
$2$			&$45$	&$0.1$	 &$100$		&$200$	 &$250$ \\
$3$			&$35$	&$0.002$ &$2000$	&$300$	 &$500$ \\
\hline
\end{tabular}
\end{table}

In the terminology of \cite{fullerEA17}, the consumption side of this problem is ``nondispatchable.''
In other words, consumption should \emph{not} be treated as a player potentially contributing to disequilibrium.
% \footnote{Our modeling framework handles the other case, i.e. dispatchable consumption, equally well.
% In this case we proceed by the analysis in \Cref{sec:spe}.}
Consumption is modeled instead by an inverse demand curve that provides constraints between the level of consumption $x^q$ and price $x^p$ that are included in the global constraints $G$.
Thus we have $x = (x^p, x^q)$ and
\[
	G = \set{ (x, y_1, y_2, y_3) : 
		x^p = \alpha - \beta x^q,
		x^q = \smallsum_{i=1}^3 y_i^c,
		x^q \in [0, C_1^U + C_2^U + C_3^U]},
\]
where $\alpha = 200$ and $\beta = 0.2$.
Note that $G$ also includes the basic requirement that production and consumption balance, as well as bounds on consumption implied by production bounds.

The ultimate goal is to determine price and consumption that minimize disequilibrium, which in this case equals total opportunity cost defined by \cite{fullerEA17}.
\Cref{alg:CG} is applicable.
We first guess a consumption level of $x^q = 100$, set the price accordingly
(i.e. $x^p = \alpha - \beta x^q$),
solve the player problems, and use the optimal solutions to initialize the $Y_i^L$ sets.
This initial guess of consumption is arbitrary and does not affect the finite termination of \Cref{alg:CG}.
We can then solve the lower-bounding problem~\eqref{md_ref_LB} which takes the form
\begin{align}
\notag
\delta^L = 
\inf_{x,y,w}\; & \smallsum_{i \in I} \group{a_i y_i^c + (\sfrac{1}{2}) c_i (y_i^c)^2 + b_i y_i^b - x^p y_i^c  - w_i} \\
\st
\notag & x^p = \alpha - \beta x^q, \\
\notag & x^q = \smallsum_i y_i^c, \\
\notag & x^q \in [0, C_1^U + C_2^U + C_3^U], \\
\notag & y_i^b C_i^L \le y_i^c \le y_i^b C_i^U, \quad \forall i \in I,\\
\notag & y_i^c \in \mbb{R}, \quad \forall i \in I,\\
\notag & y_i^b \in \set{0,1}, \quad \forall i \in I, \\
\notag & w_i \le a_i z_i^c + (\sfrac{1}{2}) c_i (z_i^c)^2 + b_i z_i^b - x^p z_i^c,  \quad \forall z_i \in Y_i^L, \quad\forall i\in I.
\end{align}
Note that we may use the constraint $x^p = \alpha - \beta x^q$ to eliminate the variable $x^p$.
Further, we can transform the objective to 
\[
	-(\alpha - \beta x^q)x^q + 
	\smallsum_i (a_i y_i^c + (\sfrac{1}{2}) c_i (y_i^c)^2 + b_i y_i^b - w_i)
\]
where we have used the constraint $x^q = \smallsum_i y_i^c$.
Thus, the lower-bounding problem may be solved as an MIQP.
We use CPLEX to solve the player problems and lower-bounding problems.

\begin{table}
\caption{Minimum disequilibrium solution of unit commitment example of \Cref{sec:uc}}
\label{tab:uc_soln}
\centering
\begin{tabular}{ccccc}
\hline
$x^{p,*}$
		& $x^{q,*}$
				& $(y_1^{b,*}, y_1^{c,*})$
						& $(y_2^{b,*}, y_2^{c,*})$
								& $(y_3^{b,*}, y_3^{c,*})$
\\
\hline
$39.5$	& $802.5$
				& $(1,502.5)$
						& $(0,0)$
								& $(1,300)$ \\
\hline
\end{tabular}
\end{table}

The method converges after three iterations when we have $\delta^L = \delta^U = 931.41$
(where, for simplicity, we solve the subproblems exactly).
The minimum disequilibrium price and consumption are $x^{p,*} = 39.5$ and $x^{q,*} = 802.5$;
see \Cref{tab:uc_soln} for the full solution.
These values happen to agree with those found by a heuristic procedure in \cite[Appendix D]{fullerEA17}.
Meanwhile, \citet{fullerEA17} also report the minimum complementarity solution from \cite{gabriel2013A}.
They find that the disequilibrium/total opportunity cost achieved by the minimum complementarity method is $988$ (see \cite[Table 2]{fullerEA17}).
%This is about six percent greater than the value of $931.41$ found by \Cref{alg:CG}.
The issue with the heuristic procedure of \cite{fullerEA17} and the minimum complementarity approach is that there is no guarantee that a minimum disequilibrium solution is found;
we may be left wondering whether an equilibrium does in fact exist.
In contrast, thanks to Theorem~\ref{thm:cg_termination}, we can guarantee that $931.41$ is in fact the minimum disequilibrium value, and that no equilibrium exists.

% \begin{figure}
% \centering
% \begin{tikzpicture}
% 	\definecolor{viridis2}{rgb}{0.190631, 0.407061, 0.556089}
% 	\begin{axis}[
% 		xlabel={Iteration},
% 		ylabel={Disequilibrium},
% 		xmin=1, xmax=3,
% 		ymin=-20000, ymax=60000,
% 		xtick={1,2,3},
% 		ytick={-20000,0,20000,40000,60000},
% 		legend pos=north east,
% 		ymajorgrids=false,
% 		grid style=dashed,
% 	]
	
% 	\addplot[domain=1:3,samples=3,color=black,forget plot]
% 		{0};
% 	% solution from AIMMS code, md_unit_commitment
% 	\addplot[color=black,mark=square]
% 		coordinates {
% 		(1,55076)(2,931.41)(3,931.41)
% 		};
% 		\addlegendentry{Upper bound}
% 	\addplot[color=viridis2,mark=circle]
% 		coordinates {
% 		(1,-19585)(2,928.91)(3,931.41)
% 		};
% 		\addlegendentry{Lower bound}
% 		% \legend{Upper bound, Lower bound}
% 	\end{axis}
% \end{tikzpicture}
% \caption{Progress of upper and lower bounds from \Cref{alg:CG} applied to the unit commitment example of \Cref{sec:uc}.}
% \label{fig:bounds}
% \end{figure}

%%%%%%%%%%%%%%%%%%%%%%%%%%%%%%%%%%%%%%%%%%%%%
%%%%%%%%%%%%%%%%%%%%%%%%%%%%%%%%%%%%%%%%%%%%%
\subsection{Application: Price equilibrium in a natural gas network}
\label{sec:spe}
We present an example in spatial price equilibrium related to a gas transmission network and apply the primal-dual approach of \Cref{sec:pc_duality}.
This example is modified from the Belgian natural gas system model of \cite{chen2019equilibria}.
The original example was an optimization problem over continuous variables with nonconvex constraints, which the authors approached by taking a convex relaxation of the problem and subsequently formulating the KKT conditions.
We will instead retain the original nonconvex constraints, and further modify the example by adding integer decisions.
Yet, by application of Theorem~\ref{thm:dual_disequilibrium}, we can establish that an equilibrium exists and determine the equilibrium prices.

In the basic setting of the problem, we have a network of nodes (cities) connected by gas pipelines, and at each node there may be either supplies of gas entering the network, or consumption of gas from the network, or possibly neither.
See \Cref{fig:network}.
Loosely, we assume we have a gas market with a single price at each node, and we wish to find the equilibrium prices at each node.

\begin{figure}
\centering
\tikzstyle{supply}=[regular polygon,draw,regular polygon sides=4]
\tikzstyle{transm}=[regular polygon,draw,regular polygon sides=6]
\tikzstyle{demand}=[circle,draw]
\tikzstyle{widearrow}=[-{Latex[width=2mm]}]
\begin{tikzpicture}[xscale=1.5,every node/.style={font=\tiny}]
% \tikzstyle{every node}=[font=\tiny]
\node[supply]  (1) at (-2,2) {1};
\node[supply]  (2) at (-1,2) {2};
\node[demand]  (3) at (0,2) {3};
\node[transm]  (4) at (1,2) {4};
\node[demand]  (7) at (2,2) {7};
\node[demand]  (6) at (3,2) {6};
\node[supply]  (5) at (4,2) {5};
\node[supply]  (8) at (5,2) {8};
\node[transm]  (9) at (5,0) {9};
\node[demand] (10) at (4,0) {10};
\node[transm] (11) at (3,0) {11};
\node[demand] (12) at (2,0) {12};
\node[supply] (13) at (1,0) {13};
\node[supply] (14) at (0,0) {14};
\node[demand] (15) at (-1,0) {15};
\node[demand] (16) at (-2,0) {16};
\node[transm] (17) at (3,-2) {17};
\node[transm] (18) at (2,-2) {18};
\node[demand] (19) at (1,-2) {19};
\node[demand] (20) at (0,-2) {20};
\draw[widearrow] (1) -- (2);
\draw[widearrow] (3) -- (4);
\draw[widearrow] (4) -- (7);
\draw[widearrow] (2) -- (3);
\draw[widearrow] (4) -- (14);
\draw[widearrow] (5) -- (6);
\draw[widearrow] (7) -- (6);
\draw[widearrow] (8) -- (9);
\draw[widearrow] (9) -- (10);
\draw[widearrow] (10) -- (11);
\draw[widearrow] (11) -- (12);
\draw[widearrow] (11) -- (17);
\draw[widearrow] (12) -- (13);
\draw[widearrow] (13) -- (14);
\draw[widearrow] (14) -- (15);
\draw[widearrow] (15) -- (16);
\draw[widearrow] (17) -- (18);
\draw[widearrow] (18) -- (19);
\draw[widearrow] (19) -- (20);
\end{tikzpicture}
\caption{Schematic of gas transmission network for the price equilibrium example of \Cref{sec:spe}.
Square nodes are the locations of supplies, while circular nodes are the locations of demands
(hexagonal nodes have neither supply nor demand).
Arcs show connecting pipelines, with arrows denoting direction of flow.
See \cite[Fig.~3]{chen2019equilibria} for a geographic interpretation of the network.}
\label{fig:network}
\end{figure}
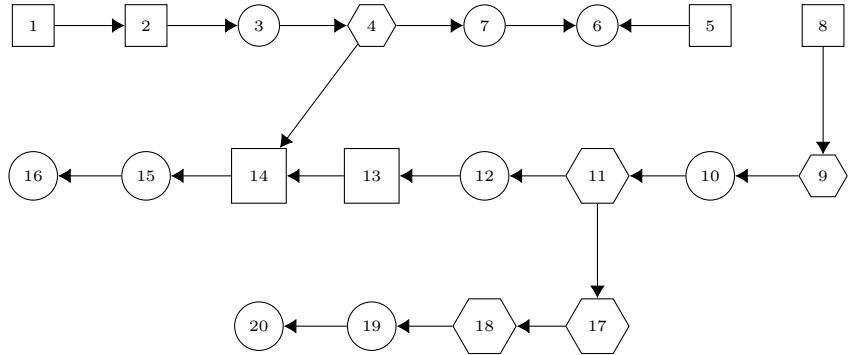

To formally model this problem in our framework, let the nodes of the network be $K$, and the arc set be $A$.
We assume that we have a price-taking transmission player operating on the whole network, buying and selling gas at each node market.
At each node we may additionally have a price-taking supply player who sells gas to the market, or a price-taking demand player who purchases gas from the market.
Let the set of nodes with a supply player be $K^s \subset K$;
let the set of nodes with a demand player be $K^d \subset K$.
%We assume $K^s \cap K^d = \emptyset$.

Thus, given each price at node $k$, $x_k$, the transmission player maximizes their profits while obeying the physical constraints between flow and pressure:
\begin{subequations}
\label{gas:transmission}
\begin{align}
\inf_{y^f, y^p}\; & \sum_{(k,k') \in A} (x_k - x_{k'}) y_{kk'}^f \\
\st
\label{gas:transmission:start}
& y_k^p \in \mbb{R},
	\quad y_k^p \in [b_k^p, B_k^p], \quad \forall k \in K,\\
& y_{kk'}^f \in \mbb{R},
	\quad 0 \le y_{kk'}^f, \quad \forall (k,k') \in A, \\
\label{gas:transmission:noncovexity}
& y_{k}^p - y_{k'}^p = \group{\sfrac{y_{kk'}^f}{w_{kk'}}}^2, \quad \forall (k,k') \in A.
% & y_k^p \in \mbb{R}, \quad \forall k \in K,\\
% \label{gas:transmission:stop}
% & y_{kk'}^f \in \mbb{R}, \quad \forall (k,k') \in A.
\end{align}
\end{subequations}
The decision variables are $y_{kk'}^f$ for each arc $(k,k')$, the flow rate of gas in the pipeline between nodes $k$ and $k'$, and $y_k^p$ for each node $k$, the squared pressure of the gas at the node.
Meanwhile, $b_k^p$ and $B_k^p$ are lower and upper bounds, respectively, on the squared pressure at the nodes, and $w_{kk'}$ is a parameter of the pipeline between the nodes.
Each term in the objective equals the negative profit from buying the gas at price $x_k$ at node $k$, transporting it to node $k'$, and selling it at price $x_{k'}$.
The objective equals the negative total profit summed over all pipelines/arcs.
It is important to note that Constraints~\eqref{gas:transmission:noncovexity} are physical constraints relating flow to the pressure drop in the pipeline;
these make the transmission player nonconvex.

Meanwhile, given the price at node $k$, $x_k$, a supply player at node $k \in K^s$ chooses the flow rate of gas $y_k^s$ to sell to the market in order to maximize their profits:
\begin{subequations}
\label{gas:supply}
\begin{align}
\inf_{y_k^s}\; & (c_k^s - x_k) y_k^s \\
\st
\label{gas:supply:start}
& y_k^s \in \mbb{R}, \quad  y_k^s \in [0, B_k^s].
\end{align}
\end{subequations}
Here, $c_k^s$ is the cost of supply, while $B_k^s$ is an upper bound on the supply capacity.

We assume that the demand players are electric power producers, and as a consequence have fixed costs associated with start up.
Thus, given the price at the relevant node, $x_k$, a demand player at node $k \in K^d$ chooses whether to start up, and if so chooses the flow rate of gas $y_k^d$ to purchase in order to maximize profits:
\begin{subequations}
\label{gas:demand}
\begin{align}
\inf_{y_k^d, y_k^b}\; & (x_k - c_k^d) y_k^d + c_k^b y_k^b\\
\st
\label{gas:demand:start}
& 0 \le y_k^d \le y_k^b B_k^d, \\
\label{gas:demand:stop}
& y_k^b \in \set{0,1}, \quad y_k^d \in \mbb{R}.
\end{align}
\end{subequations}
Here, $c_k^d$ is the marginal utility of the demand, $c_k^b$ is the fixed cost, and $B_k^d$ is an upper bound on the demand flow rate.
For both the supply and demand players, the objectives equal negative profits.

At equilibrium, we expect that for each node, the total flow into a node equals the total flow out of the node.
So, define for each $k \in K$
\[
g^b_k(y^s, y^d, y^f) \defn \chi_k^d y_k^d + \sum_{k': (k,k')\in A} y_{kk'}^f - \chi_k^s y_k^s - \sum_{k': (k',k)\in A} y_{k'k}^f,
\]
where $\chi_k^d = 1$ if $k \in K^d$ and $0$ otherwise, and similarly for $\chi_k^s$.
Then, we have the global constraints
$G = \mbb{R}^{\card{K}} \times \set{ (y^s, y^d, y^b, y^f, y^p) : g^b_k(y^s, y^d, y^f) = 0, \forall k \in K}$.

% Thus, we have the global constraints
% \[
% G = 
% \mbb{R}^{\card{K}} \times \set{ (y^s, y^d, y^b, y^f, y^p) : 
% 	% y_k^s + \sum_{k': (k',k)\in A} y_{k'k}^f = y_k^d + \sum_{k': (k,k')\in A} y_{kk'}^f,
% 	0 = \chi_k^d y_k^d + \sum_{k': (k,k')\in A} y_{kk'}^f - \chi_k^s y_k^s - \sum_{k': (k',k)\in A} y_{k'k}^f,
% 	\quad \forall k \in K },
% \]
% where $\chi_k^d = 1$ if $k \in K^d$ and $0$ otherwise, and similarly for $\chi_k^s$.

Our goal is to follow the analysis in \Cref{sec:pc_duality} and form the corresponding primal problem for this example.
To start, we first rewrite the transmission player objective as
\[\begin{aligned}
\sum_{(k,k') \in A} (x_k - x_{k'}) y_{kk'}^f 
&= 	\sum_{(k,k') \in A} x_k y_{kk'}^f - \sum_{(k,k') \in A} x_{k'} y_{kk'}^f \\
&= 	\sum_{(k,k') \in A} x_k y_{kk'}^f - \sum_{(k',k) \in A} x_{k}  y_{k'k}^f \\
&=	\sum_{k \in K}\group{\sum_{k':(k,k') \in A} x_k y_{kk'}^f}  - 
	\sum_{k \in K}\group{\sum_{k':(k',k) \in A} x_{k} y_{k'k}^f}.
\end{aligned}\]
In the second line, we have just reindexed the arcs in the second sum;
in the third line, we have rearranged the first sum over arcs by grouping the terms by ``outgoing'' arcs over all nodes, while we rearrange the second sum over arcs by grouping terms by ``incoming'' arcs over all nodes.

With this expression for the transmission player objective, it is easy to see that the sum of all the players' objectives is
\begin{multline*}
\sum_{k \in K^s} (c_k^s - x_k) y_k^s +
\sum_{k \in K^d} \group{(x_k - c_k^d) y_k^d + c_k^b y_k^b} +
\sum_{(k,k') \in A} (x_k - x_{k'}) y_{kk'}^f
= \\
\sum_{k \in K^s} c_k^s y_k^s + \sum_{k \in K^d} \group{-c_k^d y_k^d + c_k^b y_k^b} + 
\sum_{k\in K} x_k  g^b_k(y^s, y^d, y^f)
% \sum_{k\in K} x_k \group{\chi_k^d y_k^d - \chi_k^s y_k^s + \sum_{k': (k,k')\in A} y_{kk'}^f  - \sum_{k': (k',k)\in A} y_{k'k}^f}.
\end{multline*}
Of course, for any $(x, y^s, y^d, y^b, y^f, y^p) \in G$, the third sum in this expression is exactly zero.
Thus, as in \Cref{sec:pc_duality}, the minimum disequilibrium problem can be decomposed into the primal problem
\begin{align}
\notag
\delta^P = 
\inf_{y^s, y^d, y^b, y^f, y^p}\; &\sum_{k \in K^s} c_k^s y_k^s + \sum_{k \in K^d} \group{-c_k^d y_k^d + c_k^b y_k^b}\\
\st
\notag
& \text{\eqref{gas:transmission:start} - \eqref{gas:transmission:noncovexity}}, \\
\notag
& \text{\eqref{gas:supply:start}}, \quad \forall k \in K^s, \\
\notag
& \text{\eqref{gas:demand:start} - \eqref{gas:demand:stop}}, \quad \forall k \in K^d, \\
\label{gas:primal:dualized}\tag{$**$}
& 0 = g^b_k(y^s, y^d, y^f),
	\quad \forall k \in K,
\end{align}
and its corresponding Lagrangian dual problem (with Constraints~\eqref{gas:primal:dualized} being dualized).
We highlight that this is a mixed-integer nonlinear program (MINLP) with a nonconvex continuous relaxation.

Although the primal problem is nonconvex, Theorem~\ref{thm:dual_disequilibrium} still applies.
The player problems all have compact feasible sets and continuous objectives and so Assumption~\ref{assm:bounded} holds;
consequently Theorem~\ref{thm:dual_disequilibrium} asserts that the minimum disequilibrium equals the duality gap $\delta^P - \delta^D$, where we recall that $\delta^D$ is the optimal objective value of the dual problem.

Using the numerical values of the parameters in \Cref{app:gas_data}, we solve the primal problem to global optimality with BARON version 19 \citep{baron,tawarmalaniEA05}.
The optimal primal objective value is $-101.060$.
We fix the binary variables to their optimal value and re-solve the primal problem locally to get Lagrange multipliers.
These multipliers provide an intelligent guess for optimal dual variables, but without further effort they remain only a guess:
we must evaluate the dual function at these values, and if there is still a duality gap then we must resort to something like the cutting plane method discussed at the end of \Cref{sec:pc_duality}.
However, again using BARON to minimize the Lagrangian (to global optimality) at this guess for the dual variables, we see that the dual function achieves the optimal primal objective value, and we verify that strong duality indeed holds.
For completeness, we test the performance of the cutting plane method discussed at the end of \Cref{sec:pc_duality};
initializing each dual variable to zero, the method achieves an absolute duality gap of less than $10^{-3}$ after 266 iterations, which takes about 40 seconds on a laptop computer
(specifically, a four-core Intel i5-8350U CPU, with a nominal clock speed of 1.70 GHz).
% We have a relative duality gap of less than $0.001\%$.
In any case, we have an equilibrium solution;
the solution of the dual problem yields prices such that the players have no incentive to deviate from the solution given by the primal problem.
See \Cref{app:gas_data} for the full solution.

% Cutting 		% KKT multiplier guess:
% plane:
% 5.500001066		5.499999975
% 5.499999647		5.499999975
% 5.499999866		5.499999975
% 5.500001801		5.499999975
% 5.500023839		5.499999988
% 5.500021989		5.499999974
% 5.500008699		5.499999975
% 4.499995292		4.5
% 4.506890264		4.506851809
% 6.785972434		6.790078136
% 6.800233641		6.804070099
% 6.800225165		6.804070098
% 5.500010014		5.499999989
% 5.499994559		5.499999974
% 5.499994446		5.499999974
% 5.499989061		5.499999974
% 6.926810153		6.930996433
% 6.927824198		6.931721949
% 10.76249992		10.7652805
% 11.0001502		11

%%%%%%%%%%%%%%%%%%%%%%%%%%%%%%%%%%%%%%%%%%%%%
%%%%%%%%%%%%%%%%%%%%%%%%%%%%%%%%%%%%%%%%%%%%%
\section{Conclusions}
This work has analyzed and expanded upon the connections between pure Nash equilibrium and bilevel programming.
Some of these connections seem to exist in the literature in vague terms;
by posing them independently of the typical settings of complementarity problems, variational inequalities, or the Nikaido-Isoda function, we have been able to leverage ideas from the broader literature on nonconvex bilevel programs in order to propose solution algorithms for equilibrium problems with potentially nonconvex players.
The connections with bilevel programming and SIP provide directions for future research in applying solution methods for those problems to general equilibrium problems.

The concept of minimum disequilibrium was introduced as an alternative solution when no equilibrium exists.
It was shown with an example that this solution concept can handle the minimum total opportunity cost idea in unit commitment problems.
Furthermore, special but still economically relevant situations (spatial price equilibrium) can be analyzed to show that optimal dual variables have significance as part of a minimum disequilibrium solution.
In general, the examples have demonstrated the flexibility of our definitions and how specialization of \Cref{alg:CG} to various settings provides a principled way to find practical solution methods to nonconvex equilibrium problems.

\subsubsection*{Acknowledgments}
The authors would like to thank their colleague Nicolas Sawaya for introducing some of the challenges associated with equilibrium modeling within a real-world setting, as well as the various fruitful discussions on this topic over the past several years.
The authors would also like to thank their colleagues Myun-Seok Cheon and Youngdae Kim for similarly fruitful discussions while developing this work.

\section*{Declarations}
{\bf Conflict of Interest.}
The authors have no relevant financial or non-financial interests to disclose.

\noindent
{\bf Data availability.} The authors did not analyse or generate any datasets, because the work proceeds within a theoretical and mathematical approach.

%%%%%%%%%%%%%%%%%%%%%%%%%%%%%%%%%%%%%%%%%%%%%
%%%%%%%%%%%%%%%%%%%%%%%%%%%%%%%%%%%%%%%%%%%%%
\appendix
%%%%%%%%%%%%%%%%%%%%%%%%%%%%%%%%%%%%%%%%%%%%%
%%%%%%%%%%%%%%%%%%%%%%%%%%%%%%%%%%%%%%%%%%%%%
\section{Connections with the Nikaido-Isoda function}
\label{app:ni_function}

Optimization formulations of equilibrium problems have been presented in the literature before.
Many rely on the Nikaido-Isoda (NI) function, first proposed by \citet{nikaidoEA55};
see also \cite{facchineiEA10,vonheusingerEA09} for recent treatments and generalizations.
We give an alternate proof of Proposition~\ref{prop:equilibrium} using the NI function approach.

Consider the game $\hat{\mathcal{G}}(\seq[i\in\set{0,1,\dots,m}]{h_i,H_i})$ from \Cref{sec:gen_ref}.
For $y = (y_0, y_1, \dots, y_m)$, define
\[
\begin{aligned}
\phi(y) \defn 
\sup_{z_0, z_1, \dots, z_m}\;
&\sum_{i=0}^{m} \group{ h_i(y_{-i},y_i) - h_i(y_{-i},z_i) } \\
\st
& (y_{-i},z_i) \in H_i, \forall i \in \set{0,1,\dots,m}.
\end{aligned}
\]
The objective function in this optimization problem defining $\phi$ is the NI function as defined by \cite{facchineiEA10,vonheusingerEA09}.
Theorem~3.2 of \cite{facchineiEA10} states that
$y^* = (y_0^*, y_1^*, \dots, y_m^*)$ 
is a GNE of $\hat{\mathcal{G}}(\seq[i\in\set{0,1,\dots,m}]{h_i,H_i})$ if and only if 
$\phi(y^*) = 0$ and 
\begin{equation}
\label{eq:ni_optimization}
y^* \in \arg\min_y\set{\phi(y) : (y_{-i}, y_i) \in H_i, i \in \set{0,1,\dots,m}}.
\end{equation}

To use this characterization of equilibrium, consider the problem of finding a cPNE of $\mathcal{G}(G,\seq[i\in I]{g_i,Y_i})$, and following the discussion in \Cref{sec:gen_ref} we obtain an equivalent game $\hat{\mathcal{G}}(\seq[i\in\set{0,1,\dots,m}]{h_i,H_i})$ 
where
$H_0 \defn \set{(y_{-0}, y_0) : (y_0, y_1, \dots, y_m) \in G}$,
$H_i \defn \set{(y_{-i}, y_i) : y_i \in Y_i}$,
$h_i(y_{-i}, y_i) \defn g_i(y_0, y_i)$,
and $h_0$ is identically zero.
With this, we can express the feasible set of \eqref{eq:ni_optimization} in terms of the set $G$ and data of the \eqref{player_i} problems as
\[
\begin{aligned}
\Phi \defn
\big\{ 
& (y_0,y_1, \dots, y_m) : \\
&   (y_0, y_1, \dots, y_m) \in G, \\
&   y_i \in Y_i, \forall i \in \set{1,\dots,m}
\big\},
\end{aligned}
\]
which coincides with the feasible set of Problem~\eqref{md} 
(defining $x \defn y_0$).
Using the definition of the $h_i$ (and the fact that $h_0$ is identically zero), we transform $\phi$ into
\begin{equation*}
\begin{aligned}
\phi(y) = 
\smallsum_{i=1}^{m} g_i(y_0,y_i) - 
\inf_z \big\{ &\smallsum_{i=1}^{m} g_i(y_0,z_i) : \\
    & (z_0, y_1, \dots, y_m) \in G,\\
    & z_i \in Y_i,  \forall i \in \set{1,\dots,m}
\big\}.
\end{aligned}
\end{equation*}
For $y \in \Phi$, the infimum above is over a nonempty set, and furthermore, does not depend on the $z_0$ variable.
Consequently, we can ignore the side constraints encoded in $G$ and decompose the minimization.
Thus, for $y \in \Phi$, the expression for $\phi(y)$ simplifies to 
\[
\phi(y) = 
\smallsum_{i=1}^{m} g_i(y_0,y_i) - \smallsum_{i=1}^{m} g_i^*(y_0)
\]
where we recall the optimal player value function $g_i^*$ defined in Equation~\eqref{eq:optimal_value}.
Finally, note that this expression equals the objective function of Problem~\eqref{md} when 
$\mu : w \mapsto \smallsum_i w_i$.
Thus, Problems~\eqref{md} and \eqref{eq:ni_optimization} coincide, and the statement that $y^*$ is an equilibrium iff $\phi(y^*) = 0$ and $y^* \in \arg\min\set{\phi(y) : y \in \Phi}$, essentially provides an alternate proof of Proposition~\ref{prop:equilibrium}.

%%%%%%%%%%%%%%%%%%%%%%%%%%%%%%%%%%%%%%%%%%%%%
%%%%%%%%%%%%%%%%%%%%%%%%%%%%%%%%%%%%%%%%%%%%%
\section{Proof of Theorem~\ref{thm:cg_termination}}
\label{app:termination_proof}
One effect of solving the player problems inexactly is that the lower bounds (at worst) converge to the optimal value of the relaxation 
\begin{align}
\label{md_ref_relaxation}
\tilde\delta =
\inf_{x,y,w}	& \smallsum_{i\in I} (g_i(x,y_i)  - w_i) \\
\st 				
\notag & (x,y_1,\dots,y_m) \in G, \\
\notag & y_i \in Y_i,		\quad \forall i \in I, \\
\notag & w_i \le g_i^*(x) + \eta_i^*, 	\quad \forall i \in I.
\end{align}
By defining $w_i' = w_i - \eta_i^*$ and re-writing the objective as
$\smallsum_{i\in I} (g_i(x,y_i)  - w_i' - \eta_i^*)$
we see that
\[
\tilde\delta = \delta - \smallsum_i \eta_i^*.
\]
Similarly, the upper bounds will only reach within $\sum_i \eta_i^*$ of $\delta$.
The following proof makes this precise.

\begin{theorem*}[\bf\ref{thm:cg_termination}]
% Assume that the set
% \[
%     \nashset \equiv \set{ (x,y) : (x,y_1,\dots,y_m) \in G, y_i \in Y_i,\forall i \in I }
% \]
% is compact and nonempty.
% Assume that for each $i$, $g_i$ is continuous and $Y_i$ is compact.
% Then for any $\varepsilon > 0$, \Cref{alg:CG} produces an $\varepsilon$-optimal solution $(x^*,y^*)$ of Problem~\eqref{md_ref} in finite iterations.
Assume that the set
\[
    \nashset \equiv \set{ (x,y) : (x,y_1,\dots,y_m) \in G, y_i \in Y_i,\forall i \in I }
\]
is compact and nonempty.
Assume that for each $i$, $g_i$ is continuous and $Y_i$ is compact.
Let
$\epsilon^* = \limsup_{k \to \infty} \epsilon^k$
and
$\eta_i^* = \limsup_{k\to\infty} \eta_i^k$ for each $i$.
Then for any
$\varepsilon > \epsilon^* + 2 \smallsum_{i \in I} \eta_i^*$,
\Cref{alg:CG} produces an $\varepsilon$-optimal solution $(x^*,y^*)$ of Problem~\eqref{md_ref} in finite iterations.
\end{theorem*}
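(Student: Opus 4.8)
The plan is to argue by contradiction: suppose that for some $\varepsilon > 0$ the algorithm never terminates, so it generates an infinite sequence of iterates. I would index the iterations by $k = 1, 2, \dots$ and denote by $(x^k, y^k, w^k)$ the solution of the lower-bounding problem~\eqref{md_Nash_LB}, by $\delta^L_k$ its optimal value, and by $z_i^k$ the player-problem solution obtained at $x^k$, so that $g_i(x^k, z_i^k) = g_i^*(x^k)$. The first thing to record is that, at optimality of~\eqref{md_Nash_LB}, each $w_i^k$ is pushed to its binding bound $w_i^k = \min\set{g_i(x^k, z_i) : z_i \in Y_i^L} \ge g_i^*(x^k)$, so the per-player ``violations'' $w_i^k - g_i^*(x^k)$ are nonnegative. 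The key observation is then the identity
\[
\delta^U - \max\set{\delta^L_k, 0} \le \smallsum_{i \in I}\group{g_i(x^k, y_i^k) - g_i^*(x^k)} - \delta^L_k = \smallsum_{i \in I}\group{w_i^k - g_i^*(x^k)},
\]
which says that the optimality gap tracked at Step~\ref{step:upperbound}--\ref{step:term} is bounded above by the total violation. Consequently, whenever the algorithm fails to terminate at iteration $k$ (both the feasibility check and the gap check at Step~\ref{step:term} fail), the total violation satisfies $\smallsum_{i \in I} (w_i^k - g_i^*(x^k)) \ge \varepsilon$.

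Next I would exploit compactness. Since each $(x^k, y^k)$ lies in the compact set $\nashset$ and each $z_i^k$ lies in the compact set $Y_i$, I can pass to a subsequence (which I relabel) along which $x^k \to x^*$ and $z_i^k \to z_i^*$ for every $i$. The crucial structural fact is that when the algorithm does not terminate at iteration $k$, it appends each $z_i^k$ to $Y_i^L$; therefore, for any later iteration $\ell > k$ the point $z_i^k$ belongs to $Y_i^L$, so its cut is enforced in~\eqref{md_Nash_LB}, giving $w_i^\ell \le g_i(x^\ell, z_i^k)$. Combining this with $g_i^*(x^\ell) = g_i(x^\ell, z_i^\ell)$ yields, for all $\ell > k$,
\[
0 \le w_i^\ell - g_i^*(x^\ell) \le g_i(x^\ell, z_i^k) - g_i(x^\ell, z_i^\ell).
\]
Because a subsequence preserves the order of the iterations, this inequality remains available for all index pairs in the relabeled subsequence.

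Finally, I would close with a uniform continuity estimate. The projection of $\nashset$ onto the $x$-coordinates is compact, so each $g_i$ is continuous, hence uniformly continuous, on the compact set containing all the relevant pairs $(x^\ell, z_i^k)$. Since $(z_i^k)$ converges it is Cauchy, so for $k, \ell$ large the arguments $z_i^k$ and $z_i^\ell$ are arbitrarily close, and uniform continuity forces $g_i(x^\ell, z_i^k) - g_i(x^\ell, z_i^\ell)$ below $\varepsilon/m$ uniformly in $x^\ell$. Summing over the $m$ players gives $\smallsum_{i \in I} (w_i^\ell - g_i^*(x^\ell)) < \varepsilon$ for all sufficiently large $\ell$, contradicting the lower bound $\ge \varepsilon$ established earlier; hence the algorithm must terminate after finitely many iterations. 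That the returned point is $\varepsilon$-optimal is then immediate from the two termination criteria, since $\delta^U \ge \delta \ge \max\set{\delta^L, 0}$ gives $\delta^U - \delta < \varepsilon$. I expect the main obstacle to be this uniform-continuity step: one must work on a genuinely compact domain so that pointwise continuity upgrades to uniform continuity, and must set up the order-preserving subsequence carefully so that a cut point $z_i^k$ generated early is guaranteed to constrain every later lower-bounding problem. The remaining ingredients, namely the gap-equals-violation identity and the monotonicity of the bounds, are routine.
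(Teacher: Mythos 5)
Your proof is correct, but it is organized differently from the paper's. The paper proves \Cref{thm:cp_termination} via two asymptotic lemmas (Lemmas~\ref{lem:LB_convergence} and \ref{lem:UB_convergence}): it extracts a convergent subsequence $(x^k,y^k,w^k) \to (x^*,y^*,w^*)$, $z^k \to z^*$, shows the limit point is \emph{feasible} in \eqref{md_Nash} by a double limit over the cut inequalities $w_i^{\ell} \le g_i(x^{\ell},z_i^k)$ plus a contradiction argument involving a hypothetical better player response $z_i^{\dagger}$, and concludes that both $\delta^{L,k} \to \delta$ and $\delta^{U,k} \to \delta$; finite $\varepsilon$-termination is then an immediate corollary. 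You instead argue finite termination directly by contradiction: your inequality $\delta^U - \max\set{\delta^L_k,0} \le \smallsum_i (w_i^k - g_i^*(x^k))$ is valid (since after Step~\ref{step:upperbound} one has $\delta^U \le \smallsum_i(g_i(x^k,y_i^k)-g_i^*(x^k))$, and subtracting $\delta^L_k = \smallsum_i(g_i(x^k,y_i^k)-w_i^k)$ gives exactly the total violation), so non-termination forces the violation to stay $\ge \varepsilon$, while the cuts $w_i^{\ell} \le g_i(x^{\ell},z_i^k)$ together with $g_i^*(x^{\ell}) = g_i(x^{\ell},z_i^{\ell})$, uniform continuity of $g_i$ on the compact set $\mathrm{proj}_x(\nashset) \times Y_i$, and the Cauchy property of the convergent subsequence $\seq[k]{z_i^k}$ force it below $\varepsilon$ --- a contradiction. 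The two arguments share the same engine (compactness, the accumulated cuts, continuity of $g_i$, in the tradition of \cite{blankenshipEA76}), and your uniform-continuity step plays precisely the role of the paper's double-limit step; but your route never identifies a limit point or proves the bounds converge. What you gain is a shorter, more elementary proof of exactly the stated theorem; what the paper's lemmas buy beyond that is the stronger conclusion $\delta^{L,k} \to \delta$ and $\delta^{U,k} \to \delta$, which underwrites the remark in \Cref{sec:solution_methods} that when no equilibrium exists ($\delta > 0$) the lower bound eventually certifies this, and which exhibits the accumulation point itself as a minimum disequilibrium solution. One small point to tighten: your final sentence handles only the Step~\ref{step:term} exit; for the other exit (the feasibility check $w_i \le g_i^*(x)$ passing) you should note that the returned point is feasible in \eqref{md_Nash} with objective value $\delta^L \le \delta$, hence exactly optimal, so it is in particular $\varepsilon$-optimal. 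Also, your observation that $w_i^k$ is at its binding bound, while true, is not actually needed: the contradiction only uses the upper bound on the violation, not its nonnegativity.
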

\begin{proof}
We establish that the upper and lower bounds converge to some values $\delta^{U,*}$ and $\delta^{L,*}$, respectively, such that
$\delta^{L,*} \le \delta \le \delta^{U,*}$
and
$\delta^{U,*} - \delta^{L,*} \le \epsilon^* + 2\smallsum_i \eta_i^*$.

To begin, we show the the approximate solutions of the lower bounding problem~\eqref{md_ref_LB} have a subsequence that converge to a feasible point of the relaxation~\eqref{md_ref_relaxation}.
Let 
$\seq[k \in \mbb{N}]{(x^k,y^k,w^k)}$
be the sequence of feasible solutions of Problem~\eqref{md_ref_LB} produced by \Cref{alg:CG}.
Since more elements are added to $Y_i^{L,k}$ at each iteration, the part of the solution sequence $\seq[k]{w_i^k}$ is non-increasing, but bounded below by the minimum of $g_i$ on $\nashset$; 
continuity and compactness ensure that this is finite
(specifically,
$\inf_{x,y,z_i} \set{ g_i(x,z_i) : z_i \in Y_i, (x,y) \in \nashset } > -\infty$).
Thus, for each $i$, $\set{w_i^k : k \in \mbb{N}}$ is contained in a compact set, and so the entire solution sequence is in a compact set.
For each $i$, let $\seq[k \in \mbb{N}]{z_i^k}$ be the corresponding sequence of approximate solutions to the player problem~\eqref{player_i};
these must exist at each iteration $k$ by continuity of $g_i$ and compactness of $Y_i$.
Again, the image of this sequence $\set{ z_i^k : k \in \mbb{N} }$ is in a compact set ($Y_i$) for each $i$.
Consequently, we have that a subsequence of solutions converges to some point.
Abusing notation, we have that 
$(x^k,y^k,w^k) \to (x^*,y^*,w^*)$ and 
$z^k \to z^*$.
Note that we have $(x^*,y^*) \in \nashset$.

Now, we establish that $(x^*,y^*,w^*)$ is feasible in the relaxation~\eqref{md_ref_relaxation}.
Since $z_i^k$ is added to $Y_i^{L,k}$ at the end of each iteration, we have for each $i$
\[
 w_i^{\ell} \le g_i(x^{\ell}, z_i^k), \quad \forall \ell,k : \ell > k.
\]
By taking the limit over $\ell$, and then the limit over $k$, we get for each $i$
\begin{equation}
\label{eq:limit_feasibility}
w_i^* \le g_i(x^*, z_i^*).
\end{equation}
Now, for a contradiction, assume that for some $i$, $w_i^* > g_i^*(x^*) + \eta_i^*$,
indicating that $(x^*,y^*,w^*)$ is not feasible in Problem~\eqref{md_ref_relaxation}.
This means that there exists $z_i^{\dagger} \in Y_i$ 
(feasible in the player problem)
with 
\begin{equation}
\label{eq:for_contradiction}
w_i^* > g_i(x^*, z_i^{\dagger}) + \eta_i^*.
\end{equation}
By definition of $z_i^k$ as an approximate minimizer of \eqref{player_i} for $x = x^k$, we have
$g_i(x^k, z_i^{\dagger}) + \eta_i^k \ge g_i(x^k, z_i^{k})$
for all $k$, and taking the limit superior over $k$ we get
% a^k \le b^k + c^k \forall k
% ==>
% limsup a^k \le limsup (b^k + c^k) \le limsup b^k + limsup c^k
\[
g_i(x^*, z_i^{\dagger}) + \eta_i^* 
= \limsup_{k\to \infty}(g_i(x^k, z_i^{\dagger}) + \eta_i^k)
\ge \limsup_{k\to \infty} g_i(x^k, z_i^{k})
= g_i(x^*,z_i^*).
\]
Combined with Inequality~\eqref{eq:for_contradiction} this gives
\[
w_i^* > g_i(x^*,z_i^*),
\]
which contradicts \eqref{eq:limit_feasibility}.
Thus, $(x^*,y^*,w^*)$ is feasible in Problem~\eqref{md_ref_relaxation}, and in particular, satisfies for each $i \in I$
\begin{equation}
\label{eq:relaxed_feasibility}
w_i^* \le g_i^*(x^*) + \eta_i^*.
\end{equation}

Next, we focus on the lower bounds.
The algorithm's lower bound is constructed as
\[
\delta^{L,k} = \max\set{\munderbar\delta^{L,k}, \delta^{L,k-1}}
\]
and so forms a non-decreasing sequence.
From the approximate solution of the lower bounding problem~\eqref{md_ref_LB}, we have
$\munderbar\delta^{L,k} \le \munderbar\delta^k \le \delta$.
A simple induction argument establishes that $\delta^{L,k} \le \delta$ for all $k$, and so 
\[\begin{aligned}
\delta^{L,*} 
&\defn \lim_{k \to \infty} \delta^{L,k} \\
&\le \delta.
\end{aligned}\]
Since we have
$\munderbar\delta^{L,k} \le \delta^{L,k}$
for all $k$, it follows
$\bar\delta^{L,k} - \delta^{L,k} \le \bar\delta^{L,k} - \munderbar\delta^{L,k}$ for all $k$.
By construction, 
$\bar\delta^{L,k} - \munderbar\delta^{L,k} \le \epsilon^k$ for all $k$, 
and so we have
$\limsup_{k\to \infty} (\bar\delta^{L,k} - \delta^{L,k}) \le \limsup_{k\to \infty} \epsilon^k \defn \epsilon^*$.
Note that 
$\bar\delta^{L,k} = \smallsum_{i} (g_i(x^k,y_i^k)  - w_i^k)$
has a subsequential limit 
\[
\bar\delta^{L,*} \defn \smallsum_{i} (g_i(x^*,y_i^*)  - w_i^*).
\]
Combining this,
\begin{align}
\notag
\bar\delta^{L,*} - \delta^{L,*}
&\le
% IN DETAIL:
% 	\limsup_{k\to \infty} \bar\delta^{L,k} - \lim_k \delta^{L,k} \\
% &=
% (by additivity of limsup when one sequence converges)
	\limsup_{k\to \infty} (\bar\delta^{L,k} - \delta^{L,k}) \\
\label{lower_bounds_bound}
&\le \epsilon^*.
\end{align}

Next, we focus on the upper bounds.
Note that for each $i$, $g_i^* : \mbb{R}^{n_0} \to \mbb{R}$ is a continuous function by \cite[Theorem~1.4.16]{aubin_frankowska}.
Thus,
\begin{equation}
\label{bar_delta}
\bar\delta^k \defn \smallsum_i (g_i(x^k,y_i^k) - g_i^*(x^k))
\end{equation}
has a subsequential limit 
$\smallsum_i (g_i(x^*,y_i^*) - g_i^*(x^*))$
and by Inequality~\eqref{eq:relaxed_feasibility}
\[\begin{aligned}
\bar\delta^* 
&\defn \smallsum_i (g_i(x^*,y_i^*) - g_i^*(x^*)) \\
&\le \smallsum_i (g_i(x^*,y_i^*) - w_i^* + \eta_i^*) \\
&= \bar\delta^{L,*} + \smallsum_i \eta_i^*.
\end{aligned}\]
For each $k$, $\bar\delta^k$ is an upper bound: $\bar\delta^k \ge \delta$.
Consequently, so is $\bar\delta^*$: $\bar\delta^* \ge \delta$.
Rearranging Inequality~\eqref{lower_bounds_bound}, we have
$\bar\delta^{L,*} - \epsilon^* \le \delta^{L,*}$.
Combining these relations, we get:
\begin{equation}
\label{eq:bounds_range}
\bar\delta^{L,*} - \epsilon^* \le \delta^{L,*}  \le \delta \le \bar\delta^* \le \bar\delta^{L,*} + \smallsum_i \eta_i^*.
\end{equation}
Thus, $\seq[k]{\bar\delta^k}$ converges to within $\epsilon^* + \smallsum_i \eta_i^*$ of both $\delta$ and the lower bound limit $\delta^{L,*}$.
It remains to show that the upper bounds that are actually calculated, $\delta^{U,k}$, also converge within a reasonable value.

To this end, note that 
\[
	\delta^{U,k} = \min\set{\bar\delta^{U,k}, \delta^{U,k-1}}
\]
where 
$\bar\delta^{U,k} = \smallsum_{i\in I} (g_i(x^k,y_i^k) - g_i^{L,k})$.
Combining this with Equation~\eqref{bar_delta}, we have
$\bar\delta^{U,k} - \bar\delta^k = \smallsum_i g_i^*(x^k) - g_i^{L,k}$.
By construction of $g_i^{L,k}$, 
$0 \le g_i^*(x^k) - g_i^{L,k} \le \eta_i^k$
and so
$0 \le \bar\delta^{U,k} - \bar\delta^k \le \smallsum_i \eta_i^k$.
It is simple to see that $\delta^{U,k} \ge \delta$ for all $k$, and that it is a non-increasing sequence, and so it must converge to some value greater than $\delta$:
\[\begin{aligned}
\delta^{U,*} 
	&\defn \lim_{k \to \infty} \delta^{U,k}\\
	&\ge \delta.
\end{aligned}\]
Further, $\delta^{U,k} \le \bar\delta^{U,k}$ for all $k$ and so 
$\delta^{U,k} - \bar\delta^k \le \smallsum_i \eta_i^k$.
Consequently, 
\[\begin{aligned}
% THIS FIRST INEQUALITY IS LARGELY JUST INTUITION
% but formally:
% \delta^{U,*} = \lim_k \delta^{U,k}
% -\bar\delta^* \le \limsup_k -\bar\delta^k
% \delta^{U,*} - \bar\delta^* 
%	\le \lim_k \delta^{U,k} + \limsup_k -\bar\delta^k
% 	= \limsup_k (\delta^{U,k} - \bar\delta^k)
% 	(by additivity of limsup when one of the sequences converges)
\delta^{U,*} - \bar\delta^* 
&\le \limsup_{k\to \infty} (\delta^{U,k} - \bar\delta^k)\\
&\le \limsup_k \smallsum_i \eta_i^k\\
&\le \smallsum_i \limsup_k \eta_i^k\\
&= \smallsum_i \eta_i^*
\end{aligned}\]
Finally, using this and Inequality~\eqref{eq:bounds_range}, if $\delta^{U,*}$ is greater than $\bar\delta^*$, then
$\delta^{U,*} - \delta^{L,*} \le \epsilon^* + 2\smallsum_i \eta_i^*$.
Otherwise, we have
$\delta^{U,*} - \delta^{L,*} \le \epsilon^* + \smallsum_i \eta_i^*$
(since $\delta^{U,*} \ge \delta$).
In either case, we have the conclusion
\[
\delta^{U,*} - \delta^{L,*} \le \epsilon^* + 2\smallsum_i \eta_i^*.
\]
\end{proof}

%%%%%%%%%%%%%%%%%%%%%%%%%%%%%%%%%%%%%%%%%%%%%
%%%%%%%%%%%%%%%%%%%%%%%%%%%%%%%%%%%%%%%%%%%%%
\section{Data for and solution of gas network price equilibrium example}
\label{app:gas_data}
In this section we specify the data used for the example in \Cref{sec:spe}, as well as the equilibrium solution found by the primal-dual approach.
The topology of the network is in \Cref{fig:network}.
See Tables~\ref{tab:spe:nodes}, \ref{tab:spe:transmission}, \ref{tab:spe:supply}, and \ref{tab:spe:demand} for the data and solution of the overall nodes, pipelines/transmission, supplies, and demands, respectively.
In addition, we have lower and upper bounds on the squared pressure variable $b_k^p = 900$ (bar$^2$) and $B_k^p = 4900$ (bar$^2$), for each node $k \in K$.
Finally, based on the costs of supply and marginal utilities, we enforce the bounds $[0,12.1]$ for each price $x_k$ when solving the dual problem.
From \Cref{tab:spe:nodes}, we note that these bounds are not binding at the equilibrium solution.

\begin{table}[h]
\centering
\small
\begin{minipage}[t]{0.48\textwidth}
\caption{Price and squared pressure solution}
\label{tab:spe:nodes}
\begin{tabulary}{\textwidth}{CRR}
\hline
Node $k$
		& Equilibrium price, $x_k$ %(1000\$/Mm$^3$/day)
					& Equilibrium squared pressure, $y_k^p$ %(bar$^2$)
\\
\hline
% prices rounded to 3 decimals, pressures to 1 decimal
1		&  5.500	& 3305.3 \\
2		&  5.500	& 3198.0 \\
3		&  5.500	& 3031.2 \\
4		&  5.500	& 2308.6 \\	
5		&  5.500	& 1818.8 \\
6		&  5.500	& 1818.8 \\
7		&  5.500	& 1928.3 \\
8		&  4.500 	& 4900.0 \\
9		&  4.507	& 4890.0 \\	
10		&  6.790 	& 1555.6 \\
11		&  6.804 	& 1548.8 \\
12		&  6.804 	& 1543.6 \\
13		&  5.500 	& 1543.6 \\	
14		&  5.500 	& 1543.6 \\
15		&  5.500 	& 1404.6 \\
16		&  5.500 	& 1236.5 \\
17		&  6.931	& 1529.2 \\
18		&  6.932	& 1529.1 \\
19		& 10.765	&  936.3 \\	
20		& 11.000	&  900.0 \\
\hline
\end{tabulary}
\end{minipage}
\quad
\begin{minipage}[t]{0.48\textwidth}
\caption{Pipeline data and solution}
\label{tab:spe:transmission}
\begin{tabulary}{\textwidth}{CCR}
\hline
Arc, $(k,k')$ \qquad\qquad\quad
			& Weymouth constant, $w_{kk'}$ (Mm$^3$/day/bar)
							& Equilibrium pipeline flow, $y_{kk'}^f$ %(Mm$^3$/day)
\\
\hline
$(1,2)$ 	& 3.011688895 	& 31.200	\\
$(2,3)$ 	& 2.459034363 	& 31.754	\\
$(3,4)$ 	& 1.181283201 	& 31.754	\\
$(4,7)$ 	& 0.476334966 	&  9.290	\\
$(4,14)$	& 0.812192096 	& 22.464	\\
$(5,6)$ 	& 0.316632279 	&  0.000	\\
$(7,6)$ 	& 0.385558037 	&  4.034	\\
$(8,9)$ 	& 3.000000000  	&  9.490	\\
$(9,10)$	& 0.164342326 	&  9.490	\\
$(10,11)$	& 1.204674230  	&  3.125	\\
$(11,12)$	& 0.929427781 	&  2.120	\\
$(11,17)$	& 0.226813800  	&  1.005	\\
$(12,13)$	& 0.952379651 	&  0.000	\\
$(13,14)$	& 2.693737181 	&  0.000	\\
$(14,15)$	& 1.904759827 	& 22.464	\\
$(15,16)$	& 1.204674230  	& 15.616	\\
$(17,18)$	& 3.000000000  	&  1.005	\\
$(18,19)$	& 0.041270934 	&  1.005	\\
$(19,20)$	& 0.166790287 	&  1.005	\\
\hline
\end{tabulary}
\end{minipage}
\end{table}

\begin{table}
\caption{Supply bounds, costs data, and solution}
\label{tab:spe:supply}
\centering
\small
\begin{tabulary}{\textwidth}{CRRR}
\hline
Node index, $k$
	& Upper bound, $B_k^s$ (Mm$^3$/day)
			& Cost of supply, $c_k^s$ (1000\$/Mm$^3$/day)
					& Equilibrium supply flow, $y_k^s$ %(Mm$^3$/day)
\\
\hline
1  	& 31.20	& 4.5 	& 31.200 \\
2  	&  8.40 & 5.5 	&  0.554 \\
5  	&  4.80 & 5.5 	&  0.000 \\
8  	& 20.00 & 4.5 	&  9.490 \\
13 	&  0.96	& 5.5 	&  0.000 \\
14 	&  1.20 & 5.5 	&  0.000 \\
\hline
\end{tabulary}
\end{table}

\begin{table}
\caption{Demand bounds, objective function data, and solution}
\label{tab:spe:demand}
\centering
\small
\begin{tabulary}{\textwidth}{CRRRR}
\hline
Node index, $k$
	& Upper bound, $B_k^d$ (Mm$^3$/day)
				& Marginal utility, $c_k^d$ (1000\$/Mm$^3$/day)
						& Fixed cost, $c_k^b$ (1000\$)
								& Equilibrium demand flow, $y_k^d$ %(Mm$^3$/day)
\\
\hline
3  	&  3.918	& 7 	& 6		&  0.000 	\\
6  	&  4.034	& 7 	& 3		&  4.034 	\\
7  	&  5.256	& 7 	& 3		&  5.256 	\\
10 	&  6.365	& 8 	& 1		&  6.365 	\\
12 	&  2.120	& 8 	& 1		&  2.120 	\\
15 	&  6.848	& 7 	& 3 	&  6.848 	\\
16 	& 15.616	& 7 	& 3		& 15.616	\\
19 	&  0.222	& 11	& 1		&  0.000 	\\
20 	&  1.919	& 11	& 0		&  1.005 	\\
\hline
\end{tabulary}
\end{table}

%%%%%%%%%%%%%%%%%%%%%%%%%%%%%%%%%%%%%%%%%%%%%
%%%%%%%%%%%%%%%%%%%%%%%%%%%%%%%%%%%%%%%%%%%%%
%\bibliography{../main}

%%%%%%%%%%%%%%%%%%%%%%%%%%%%%%%%%%%%%%%%%%%%%
%%%%%%%%%%%%%%%%%%%%%%%%%%%%%%%%%%%%%%%%%%%%%
\end{document}